 \newcommand\shorttitle{Clairaut Anti-invariant  Riemannian Map}
 \newcommand\authors{A. Zaidi and G. Shanker}
 	\ifodd\value{page}
\authors
\shorttitle
\newtheorem{theorem}{Theorem}[section]
\newtheorem{corollary}[theorem]{Corollary}
\newtheorem{definition}[theorem]{Definition}
\newtheorem{example}[theorem]{Example}
\newtheorem{lemma}[theorem]{Lemma}
\begin{document}
 	\afterpage{\cfoot{\thepage}}
 	\clearpage
 	\date{}

 	\title{\textbf {Clairaut anti-invariant Riemannian maps to trans-Sasakian manifolds}}

 	\author{Adeeba Zaidi}
 		\author{Gauree Shanker\footnote{corresponding author, Email: gauree.shanker@cup.edu.in}}
 	 
 	\affil{\footnotesize Department of Mathematics and Statistics,
 		Central University of Punjab, Bathinda, Punjab-151 401, India.\\
 		Email:adeebazaidi.az25@gmail.com, gauree.shanker@cup.edu.in*}
 	\maketitle
	\begin{abstract}
  		In this article, we introduce Clairaut anti-invariant Riemannian maps from Riemannian manifolds to trans-Sasakian manifolds. We derive necessary and sufficient condition for an anti-invariant map to be Clairaut when base manifold is trans-Sasakian manifold. We discuss the integrability of $range\pi_{*}$ and $(range\pi_{*})^\perp$. Further, we establish harmonicity of these map. Finally, we construct a nontrivial example of such maps for justification. 
  	\end{abstract}
  	\vspace{0.2 cm}
  	
  	 	\begin{small}
  	 			\textbf{Mathematics Subject Classification:} Primary 53C15; Secondary 53C25, 54C05.\\
  	 	\end{small}
  	 
  	 	\textbf{Keywords and Phrases:} Contact manifolds, trans-Sasakian manifolds, Riemannian maps, anti-invariant Riemannian maps, Clairaut maps.
  	 	
  	\maketitle

  	\section{Introduction}
  	The concept of Riemannian maps between Riemannian manifolds was firstly introduced by Fischer in 1992 (\cite{F}). He described Riemannian maps as the generalization of isometric immersions, Riemannian submersions and isometries. The interesting part of Riemannian maps is that they satisfy general eikonal equation which is a bridge between geometrical and physical aspects of optics. In \cite{F}, Fischer described: let	$\pi: (M,g_1)\rightarrow (B,g_2)$	be a smooth map between smooth finite dimensional Riemannian manifolds $(M,g_1)$ and $(B,g_2)$ such that $0<rank\pi<min \{dimM,dimB\}$. Let $\pi_{*p}: T_pM\rightarrow T_{\pi(p)}B$ denotes the differential map at $p \in  M$, and $ \pi(p)  \in  B$. Then $T_pM$ and $T_{\pi(p)}B$ split
  	  	 		 	orthogonally with respect to $ g_1(p)$ and $g_2(\pi(p))$, respectively, as (\cite{F})
  	  	 		 	 	\begin{equation*} 
  	  	 		 	 		\begin{split}
  	  	 		 	 			T_pM &= ker\pi_{*p} \oplus (ker \pi_{*p})^\perp,\\
  	  	 		 	 			&=\mathcal{V}_p\oplus \mathcal{H}_p,
  	  	 		 	 		\end{split}	
  	  	 		 	 	\end{equation*}
  	   		 	 		\begin{equation*}
  	   		 	 		T_{\pi(p)}B = range\pi_{*p}  \oplus  (range \pi_{*p})^\perp,
  	   		 	 	\end{equation*}
  	  	 		 	 where $\mathcal{V}_p= ker\pi_{*p}$ and $\mathcal{H}_p=(ker \pi_{*p})^\perp$ are vertical and horizontal parts of $T_{p}M$ respectively. The map $\pi$ is called Riemannian map at
  	  	 		 	 	$p  \in  M,$ if the horizontal restriction
  	  	 		 	 	\begin{equation*}
  	  	 		 	 		(\pi_{*p})^h = \pi_{*p}\ |\ _{\mathcal{H}_p} :\mathcal{H}_p\rightarrow range\pi_{*p}
  	  	 		 	 	\end{equation*}
  	  	 		 	  is a linear isometry between the spaces $(ker\pi_{*p},g_1\ |_ {ker\pi_{*p}})$ and $(range\pi_{*p},g_2|_{(range\pi_{*p})}).$ In other words, $(\pi_{*p})^h$ satisfies the equation   
  		\begin{equation}\label{19}
  	  	 		 	 		g_2(\pi_{*} W,\pi_{*} Z) \ =\ g_1(W,Z),
  	  	 		 	 	\end{equation}
  	  	 		 	 	for all vector fields $W, Z$ tangent to $\Gamma (ker\pi_{*p})^\perp $.
  	  	 		 	 	During the last three decades, many authors have studied Riemannian maps (\cite{S3, S4, S1, PS ,SB ,AZ}) and the investigation is still going on.\\
  	  	 		 	  Clairaut's theorem plays an important role in differential geometry, which states that for any geodesic $\gamma$ on a surface of revolution, the function $ rsin\theta$ is constant, where $r$ is the distance between a point on the surface and rotation axis, whereas $\theta$ is an angle between $\gamma$ and the meridian curve through $\gamma$. Inspired by this theorem, Bishop (\cite{Bi}) introduced Clairaut Riemannian submersion and derived the necessary and sufficient conditions for a submersion to be Clairaut Riemannian submersion. Hereafter, Riemannian submersion is investigated broadly in both hermitian geometry as well as contact geometry \cite{P1,P2,H2,GY}. \c{S}ahin \cite{S7,S8} investigated Clairaut conditions on Riemannian map. Later, various types of Clairaut Riemannan maps  such as invariant, anti-invariant, semi-invaiant are studied with Kahler structure and cosymplectic structure  \cite{S8,AK, AK1,YR,PK}.\\
The notion of a trans-Sasakian structure $(\psi,\xi,\eta,g,\alpha,\beta)$ on $(2n+1)$-dimension manifold $B$, was introduced by Oubina (\cite{Ou}) which can be seen as a generalization of Sasakian, Kenmotsu and cosymplectic structure on a contact metric manifold, where $\alpha,\beta$ are smooth functions on $B$ . Generally, a trans-Sasakian manifold $(B,\psi,\xi,\eta,g,\alpha,\beta)$ is called a trans-Sasakian manifold of type $(\alpha,\beta)$ and manifolds of type $(\alpha, 0),(0, \beta)$ and $(0, 0)$ are called, $\alpha-$Sasakian, $\beta-$Kenmotsu and cosymplectic manifolds respectively. Since the geometry of Sasakian manifolds is very rich, it would be interesting to study different type of Riemannian maps on this structure. In this paper, we study Clairaut anti-invariant Riemannian maps from Riemannian manifolds to trans-Sasakian manifolds. The paper is organized as follows: In section $2$, we give all the basic definitions and terminologies, needed throughout the paper. In section $3$, we introduced Clairaut anti-invariant Riemannian maps from Riemannian manifolds to trans-Sasakian manifolds admitting horizontal Reeb vector field. Further, we study necessary and  sufficient condition for a curve on base manifold to be geodesic and obtain necessary and sufficient condition for an anti-invariant Riemannian map to be Clairaut when base manifold is trans-Sasakian with horizontal Reeb vector field. We find the integrability condition for distributions of tangent bundle on base manifold. Later, we check the harmonicity of these maps. We also construct some nontrivial examples for such maps.
     
 	\section{Preliminaries}
 	In this section, we recall the definitions of contact manifolds, trans-Sasakian manifolds and some important properties related to Riemannian maps.\\
 		Let $B$ be a $(2n+1)-$dimensional differentiable manifold, then $B$ is said to have an almost-contact structure $(\psi,\xi,\eta)$, if it admits a $(1,1)$ tensor field $\psi$, a vector field called characteristic vector field or Reeb vector field $\xi$, and a  $1-$form $\eta$, satisfying (\cite{B})
 		\begin{equation} \label{a}
 			\psi^2=-I + \eta \otimes \xi,~~~ \psi \xi=0,~~\eta\circ\psi=0,~~ \eta(\xi)=1,  
 		\end{equation}
 		where $I$ is the identity mapping. A Riemannian metric $g$ on an almost-contact manifold $B$ is said  to be compatible with the almost-contact structure $(\psi, \xi, \eta)$, if for any vector fields $W,Z \in \varGamma(TB)$, $g$ satisfies (\cite{B})
 		\begin{align}
 			g(\psi W,\psi Z) \ &=\  g(W,Z)-\eta (W)\eta (Z),  \label{b} \\
 				g(\psi W,Z) &= g(W,\psi Z), \ \eta(W) \ =\ g(W,\xi), \label{c}
 		\end{align}
 		 the structure $(\psi, \xi, \eta,g)$ is called an almost contact metric structure.\\
 		 The almost contact structure $(\psi, \xi, \eta)$ is said to be normal if $ N + d\eta \otimes \xi=0,$ where $N$ is the
 		Nijenhuis tensor of $\psi$. If $d\eta=\Phi$, where $\Phi(W,Z)=g(\psi W,Z)$ is a tensor field of type $(0,2)$, then an almost contact metric
 		structure is said to be normal contact metric structure. An almost contact metric manifold $B$
 		is called a trans-Sasakian manifold of type $(\alpha,\beta)$  (\cite{B}), if it satisfies
 		\begin{align}
 					(\nabla_W\psi)Z \ &= \ \alpha(g(W,Z)\xi-\eta(Z)W)+\beta[g(\psi W,Z)\xi-\eta(Z)\psi W],\label{4} \\  
 				(\nabla_W\eta)Z \ &= \ -\alpha g(\psi W,Z)\xi+\beta g(\psi W,\psi Z),\\
 				\nabla_{W}\xi&=-\alpha \psi W +\beta (W-\eta(W)\xi), \label{e}	
 		\end{align}
 		 where $\alpha, \ \beta$ are smooth functions and $\nabla$ is Levi-Civita connection of $g$ on $B$. Further, it can be seen that a trans-Sasakian manifold of type $(\alpha, 0)$ is a $\alpha-$Sasakian manifold and a trans-Sasakian manifold of type $(0, \beta)$ is a $\beta-$Kenmotsu manifold. A trans-Sasakian manifold of type (0, 0) is called a cosymplectic manifold. In particular, for $\alpha=1, \beta=0$; and $\alpha=0, \beta=1$, a trans-Sasakian manifold will be Sasakian and Kenmotsu manifold respectively. \begin{example}\cite{B}
 	Let	$B=\{(u,v,w)\in\mathbb{R}^3,w\neq0\}$ be a $3-$dimensional Riemannian manifold associated with Riemannian metric $g_2$ given by
 	$$  g_2=\frac{1}{4}\begin{pmatrix}
 	 		 		 1+v^2&0&-v\\
 	 		 		 0&1&0\\
 	 		 		 -v&0&1
 	 		 		 \end{pmatrix},$$
 	$1-$form $\eta=\frac{1}{2}(dw-vdu)$  and linearly independent global frame $\{E_1,E_2,E_3\}$ be defined as $E_1=2\frac{\partial}{\partial v}, E_2=\psi E_1=2(\frac{\partial}{\partial u}+v\frac{\partial}{\partial w}), E_3=2\frac{\partial}{\partial w}=\xi,$ where $\xi$ is the characteristic vector field (Reeb vector field) and the $(1,1)-$ tensor field $\psi$ is given by the matrix 
 		 		  $$\psi=\begin{pmatrix}
 		 		0&1&0\\
 		 		-1&0&0\\
 		 		0&v&0
 		 		 \end{pmatrix}, 
 		 		$$ then $B$ is a trans-Sasakian manifold of type $(1,0)$.
 		 \end{example} 
 		 
 Further, let $\pi: (M^m,g_1)\rightarrow (B^b,g_2)$ be a smooth map between smooth finite dimensional Riemannian manifolds, then the differential map $\pi_{*}$ of $\pi$ can be viewed as a section
 		 of bundle $Hom(TM, \pi^{-1}TB)\rightarrow M$, where $\pi^{-1}TB$ is the pullback bundle whose fibers at $p \in M$ is $(\pi^{-1}TB)_{p} = T_{\pi(p)}B$. The bundle $Hom(TM, \pi^{-1}TB)$
 		 has a connection $\nabla$ induced from the Levi-Civita connection $\nabla^{M}$ and the pullback connection $\overset{B}{\nabla^{\pi}}$, then the second fundamental form of $\pi$ is given by (\cite{S7} )
 		 \begin{equation}  \label{5}
 		 			(\nabla \pi_{*})(W,Z) = \overset{B}{\nabla^{\pi}_{W}}\pi_{*}Z-\pi_{*}(\nabla_W^MZ)
 		\end{equation}
 		 	for all $W,Z ~\in \Gamma (TM)$ and $\overset{B}{\nabla^{\pi}_{W}}\pi_{*}Z \circ \pi = \nabla ^{B}_{\pi_{*}W}\pi_{*}Z$.\\ \\ 
 Also, for any vector field $W$ on $M$ and any section $V$ of $(range\pi{*})^\perp$, we have $\nabla ^{\pi\perp}_{W} V$, the
 		 	 	 orthogonal projection of $\nabla ^{B}_{W}V $ on $(range\pi_{*})^\perp$, where $\nabla \pi_{*}^\perp $ is linear
 		 	 	connection on $(range\pi_{*})^\perp$ such that $\nabla ^{\pi\perp} g_{2} = 0.$\\
 		 	 	 Now, for a Riemannian map, we have (\cite{S7})
 		 	 	\begin{equation}\label{6}
 		 	 	 \nabla ^{B}_{\pi_{*}W}V=-\mathcal{A}_{V}\pi_{*}W+\nabla^{\pi\perp}_WV,
 		 	 	\end{equation}
 		 	 	where $\mathcal{A}_V\pi_{*}W$ is the tangential component of  $\nabla ^{B}_{\pi_{*}W}V$. At $p\in M$, we have $\nabla ^{B}_{\pi_{*}W}V(p)\in T_{\pi(p)}B,~ ~\mathcal{A}_{V}\pi_{*}W(p) \in \pi_{*p}(T_pM)$ and $\nabla^{\pi\perp}_WV(p)\in (\pi_{*p}(T_pM))^\perp.$ It is easy to see that $\mathcal{A}_{V}\pi_{*}W$ is bilinear in $V$ and $\pi_{*}W$, and $\mathcal{A}_{V}\pi_{*}W$ at $p$ depends only on $V_{p}$ and $\pi_{*p}W_{p}.$ By direct computations, we obtain \cite{S7}
 		 	 \begin{equation}\label{10}
 		 	 	g_2 (\mathcal{A}_V \pi_{*}W,\pi_{*}Z) = g_2 (V,(\nabla\pi_{*})(W, Z))
 		 	 \end{equation} 
 		 	 	for $W, Z\in \Gamma ((ker\pi_{*})^\perp)$ and $V\in \Gamma((range\pi_{*})^\perp)$. Since $(\nabla\pi_{*})$ is symmetric, it follows
 		 	 	that $\mathcal{A}_V$ is a symmetric linear transformation of $range\pi_{*}$.\\
 		 	 Moreover, Let $\pi: (M, g_1)\rightarrow (B, g_2)$ be a Riemannian map between
 		 	 Riemannian manifolds, then $\pi$ is umbilical Riemannian map if and only if \cite{S7}
 		 	 \begin{equation}\label{7}
 		 	(\nabla \pi_*)(W, Z ) = g_1(W,Z)H_2
 		 	 \end{equation}
 		 	for $ W, Z \in \Gamma((ker\pi_{*})^\perp$ and $H_2$ is nowhere zero vector field on $(range\pi_*)^\perp$. 
 		 	Also, the mean curvatures of vertical distribution $\mathcal{V}$ and horizontal distribution $\mathcal{H}$ are defined as (\cite{S7}) 
 		 	\begin{equation}\label{22}
 		 	\varrho^{\mathcal{V}}=\dfrac{1}{q}\sum_{i=1}^{q}\mathcal{H}(\nabla_{e_i}e_i),~~~~	\varrho^{\mathcal{H}}=\dfrac{1}{m-q}\sum_{j=1}^{m-q}\mathcal{V}(\nabla_{E_j}E_j),
 		 	\end{equation}
 		 	where $\{e_i\}_{i=1}^q$ and $\{E_j\}_{j=1}^{m-q}$ are local frames of $\mathcal{V}$ and $\mathcal{H}$ respectively. A distribution on $M$ is said to be minimal, if for each point in $M$, the mean curvature vanishes. 
 		 	\begin{lemma}\cite{S7}
 		 	Let $\pi$ be a Riemannian map from a Riemannian manifold
 		 	$(M, g_1 )$ to a Riemannian manifold $(B, g_2 )$. Then, $\forall~ W, Y, Z \in\Gamma((ker\pi_{*})^\perp)$, we
 		 	have
 		 \begin{equation*}
 		 	g_2 ((\nabla \pi_{*})(W, Y), \pi_{*}Z) = 0.
 		 \end{equation*}
 		 	\end{lemma}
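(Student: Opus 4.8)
The plan is to show that the $range\pi_*$-component of the second fundamental form $(\nabla\pi_*)(W,Y)$ vanishes on horizontal fields, which is exactly the claimed orthogonality. Using the definition \eqref{5} together with $\overset{B}{\nabla^{\pi}_{W}}\pi_*Y\circ\pi=\nabla^{B}_{\pi_*W}\pi_*Y$, I would first write
\begin{equation*}
g_2\big((\nabla\pi_*)(W,Y),\pi_*Z\big)=g_2\big(\nabla^{B}_{\pi_*W}\pi_*Y,\pi_*Z\big)-g_2\big(\pi_*(\nabla^{M}_WY),\pi_*Z\big).
\end{equation*}
For the second term, since $\pi_*$ annihilates the vertical part of $\nabla^{M}_WY$ and $Z\in\Gamma((ker\pi_*)^\perp)$, the Riemannian map property \eqref{19} gives $g_2(\pi_*(\nabla^{M}_WY),\pi_*Z)=g_1(\nabla^{M}_WY,Z)$. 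It therefore remains to prove that the first term, which I abbreviate $I(W,Y,Z):=g_2(\nabla^{B}_{\pi_*W}\pi_*Y,\pi_*Z)$, also equals $g_1(\nabla^{M}_WY,Z)$, so that the two contributions cancel.

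To evaluate $I$, I would run the Koszul computation downstairs on $M$. Metric compatibility of $\nabla^{B}$ combined with \eqref{19} (which identifies $g_2(\pi_*A,\pi_*B)$ with the function $g_1(A,B)$ on $M$) yields $W\,g_1(Y,Z)=I(W,Y,Z)+I(W,Z,Y)$ and its two cyclic analogues in $W,Y,Z$. The auxiliary identity I need is $\nabla^{B}_{\pi_*A}\pi_*B-\nabla^{B}_{\pi_*B}\pi_*A=\pi_*[A,B]$, which I would obtain from the symmetry of the second fundamental form $(\nabla\pi_*)$ applied to \eqref{5}; pairing it against $\pi_*Z$ and using \eqref{19} once more produces the skew relation $I(A,B,C)-I(B,A,C)=g_1([A,B],C)$ (again because $Z$ is horizontal, so the vertical part of the bracket drops out under $\pi_*$).

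Assembling the three metric-compatibility relations in the standard Koszul combination (first $+$ second $-$ third) and substituting the skew relation to collapse the six resulting $I$-terms, I expect one side to reduce to $2\,I(W,Y,Z)$ and the other to the exact right-hand side of the Koszul formula for $g_1(\nabla^{M}_WY,Z)$ on $(M,g_1)$. This forces $I(W,Y,Z)=g_1(\nabla^{M}_WY,Z)$, and subtracting the second term gives the asserted vanishing. The main obstacle is twofold: the careful cyclic bookkeeping in the Koszul combination, and, more conceptually, justifying the bracket identity, since $\pi_*W$ is only a section of the pullback bundle and not a genuine vector field on $B$, so the relation $\nabla^{B}_{\pi_*A}\pi_*B-\nabla^{B}_{\pi_*B}\pi_*A=\pi_*[A,B]$ must be read through the symmetry of $(\nabla\pi_*)$ rather than as a naive torsion-free computation on $B$.
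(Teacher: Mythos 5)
The paper gives no proof of this lemma at all---it is quoted verbatim from \cite{S7}---and your reconstruction is essentially the standard argument of that reference: the isometry property \eqref{19} together with metric compatibility of the pullback connection yields the three cyclic relations $W\,g_1(Y,Z)=I(W,Y,Z)+I(W,Z,Y)$ etc., the symmetry of $(\nabla\pi_{*})$ (recorded in Section 2 of the paper) combined with torsion-freeness of $\nabla^{M}$ gives the skew relation $I(A,B,C)-I(B,A,C)=g_1([A,B],C)$, and the Koszul combination does collapse exactly as you predict, to $2I(W,Y,Z)=2\,g_1(\nabla^{M}_W Y,Z)$, cancelling the second term coming from \eqref{5}. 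Your two flagged subtleties are also handled correctly: the vertical parts of $\nabla^{M}_W Y$ and of the brackets are harmless because they are killed by $\pi_{*}$ or pair to zero against the horizontal field $Z$, and the identity $\nabla^{B}_{\pi_{*}A}\pi_{*}B-\nabla^{B}_{\pi_{*}B}\pi_{*}A=\pi_{*}[A,B]$ is legitimately obtained from the symmetry of the second fundamental form rather than from a naive torsion computation on $B$, where $\pi_{*}A$ is only a section of the pullback bundle $\pi^{-1}TB$.
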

 		 	
 		\begin{definition}\cite{S7}
 		A Riemannian map $\pi: (M, g_1)\rightarrow (B, g_2)$ between Riemannian manifolds
 	is called Clairaut Riemannian map if there is a function $r : B \rightarrow R^+$ such that for every geodesic $\Omega$ on $B$, the function $(r\circ\Omega) sin \theta(s)$ is
 		constant, where, $\pi_{*}Z \in \Gamma(range\pi_{*})$ and $V\in \Gamma(range\pi_{*})^\perp$are components of $\dot{\Omega}(s)$, and $\theta(s)$ is the angle between $\dot{\Omega}(s)$ and $V$.
 		\end{definition} 
 		
 		\begin{definition}\cite{AK}
Let $\pi : (M, g_1) \rightarrow (B, g_2)$ be a Riemannian map between Riemannian manifolds such that $range\pi_{*}$ is connected and $(range\pi_*)^\perp$ is totally geodesic, and $\gamma,\Omega = \pi\circ\gamma$ be geodesics on $M$ and $B$ respectively.
Then, $\pi$ is a Clairaut Riemannian map with $r = e^h$ if and only if $\pi$ is umbilical map, and has $H = -\nabla^Bh$, where $h$ is a smooth function on $B$ and $ H$ is the mean curvature vector field of $range\pi_{*}$.

 		\end{definition}				 	 	
 		 
 	\section{
 	Clairaut anti-invariant Riemannian maps to trans-Sasakian manifolds}
 In this section, we define Clairaut anti-invariant Riemannian map from a Riemannian manifold to a trans-Sasakian manifold and discuss the geometry of such maps. Throughout this section, we are considering Reeb vector field in horizontal space $\mathcal{H}$ of $TM$ and  $(range\pi_{*x})^\perp$ as totally geodesic distribution. 	
 \begin{definition} 
 Let $\pi$ be a Riemannian map from a Riemannian manifold
 $(M, g_1)$ to an almost contact metric manifold $(B,g_2,\psi,\eta,\xi)$. Then we say that $\pi$ is
 an anti-invariant Riemannian map at $p \in M$ if $\psi (range\pi_{*p})\subset (range\pi_{*p})^\perp$. If
 $\pi$ is an anti-invariant Riemannian map for all $p \in M$, then $\pi$ is called an
 anti-invariant Riemannian map.
 \end{definition}
 In this case, the horizontal distribution $(range\pi_{*})^\perp$ can be  decomposed as
 \begin{equation}
(range\pi_{*})^\perp
 = \psi range\pi_{*} \oplus \mu,
 \end{equation}
 where $\mu$ is the orthogonal complementary distribution of $ \psi range\pi_{*}$ in $(range\pi_{*})^\perp$
 and also invariant with respect to $\psi$. $pi$ admits vertical Reeb vector field if $\xi\in range\pi_{*}$ whereas if it admits horizontel Reeb vector field $\xi$, then $\xi\in (range\pi_{*})^\perp$. It is clear that in case of horizontal Reeb vector fields, $\xi\in \mu$. For any $V\in (range\pi_{*})^\perp$, we can have
 \begin{equation}\label{8}
 \psi V=\mathcal{B}V+\mathcal{C}V,
 \end{equation}
  	where $\mathcal{B}V\in\Gamma(range\pi_{*})$ and $\mathcal{C}V\in \Gamma((range\pi_{*})^\perp)$.  
  	
  	\begin{definition}
  	An anti-invariant Riemannian map from a Riemannian manifold to a contact manifold is said to be Clairaut if it satisfies Definition 2.1.
  	\end{definition} 		

 \begin{theorem} 
 Let $\pi: M^m\rightarrow B^b$ be an anti-invariant Riemannian map from a Riemannian manifold $(M^m,g_1)$ to a trans-Sasakian manifold $(B^b,g_2,\psi,\eta,\xi)$ of type $(\alpha,\beta)$ with horizontal Reeb vector field $\xi$ and $\gamma:J\subset\mathbb{R}\rightarrow M$ be a geodesic curve on $M$, then the curve $\Omega =\pi\circ\gamma$ is a geodesic on $B$ if and only if
 	\begin{equation} \label{9}
 	-\mathcal{A}_{\psi \pi_{*}W}\pi_{*}W	-\mathcal{A}_{\mathcal{C}U}\pi_{*} W+\pi_{*}(\mathcal{H}\nabla_{W}^MZ)+\nabla_{U}^B\pi_{*}Z
 	+\eta(U)[\alpha \pi_{*}W+\beta\pi_{*}Z]=0,
 	\end{equation}
 	\begin{multline}\label{10}
 	\nabla_W^{\pi\perp}\psi(\pi_{*}W)+\nabla_W^{\pi \perp}\mathcal{C}Z+\nabla_U^{\pi\perp}\psi(\pi_{*}W)+\nabla_U^{\pi\perp}\mathcal{C}U+(\nabla\pi_{*})(W,Z)-\alpha||\dot{\Omega}||^2\xi+\eta(U)[\alpha U\\ +\beta(\psi(\pi_{*}W)+\mathcal{C}U)]=0
 	\end{multline} 
 	for any  $U\in \Gamma((rangeF_{*})^{\perp})$ and $W,Z~ \in \Gamma ((kerF_{*})^\perp)$ such that $\pi_{*}Z=\mathcal{B}U$ with $\pi_{*}W$ and $U$ as vertical and horizontal components of $\dot{\Omega}(s).$
 \end{theorem}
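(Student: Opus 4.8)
The plan is to translate the geodesic condition $\nabla^{B}_{\dot\Omega}\dot\Omega=0$ into a condition on $\psi\dot\Omega$, because the two displayed equations are organised around the components $\psi(\pi_{*}W)$, $\mathcal{B}U$ and $\mathcal{C}U$ of $\psi\dot\Omega$ rather than around $\dot\Omega$ directly. Writing $\dot\Omega=\pi_{*}W+U$ with $\pi_{*}W\in\Gamma(range\pi_{*})$ and $U\in\Gamma((range\pi_{*})^{\perp})$, I would first record the two reductions forced by the horizontal Reeb hypothesis: since $\xi\in\mu\subset(range\pi_{*})^{\perp}$ one has $\eta(\pi_{*}W)=0$, hence $\eta(\dot\Omega)=\eta(U)$; and by skew-symmetry of $\psi$ one has $g_{2}(\psi\dot\Omega,\dot\Omega)=0$. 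Applying $\psi$ and using anti-invariance together with the tangential--normal decomposition $\psi V=\mathcal{B}V+\mathcal{C}V$ gives $\psi\dot\Omega=\pi_{*}Z+\psi(\pi_{*}W)+\mathcal{C}U$, where $\pi_{*}Z=\mathcal{B}U\in range\pi_{*}$ while $\psi(\pi_{*}W),\mathcal{C}U\in(range\pi_{*})^{\perp}$.

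The engine of the proof is the identity $\nabla^{B}_{\dot\Omega}(\psi\dot\Omega)=(\nabla_{\dot\Omega}\psi)\dot\Omega+\psi(\nabla^{B}_{\dot\Omega}\dot\Omega)$, so that $\Omega$ being a geodesic is equivalent, up to the kernel of $\psi$, to $\nabla^{B}_{\dot\Omega}(\psi\dot\Omega)=(\nabla_{\dot\Omega}\psi)\dot\Omega$. I would compute the left-hand side by differentiating $\psi\dot\Omega$ along $\dot\Omega=\pi_{*}W+U$, producing six terms. The derivatives $\nabla^{B}_{\pi_{*}W}(\psi(\pi_{*}W))$ and $\nabla^{B}_{\pi_{*}W}(\mathcal{C}U)$ are handled by the relation $\nabla^{B}_{\pi_{*}W}V=-\mathcal{A}_{V}\pi_{*}W+\nabla^{\pi\perp}_{W}V$, yielding the two $\mathcal{A}$-terms and their $\nabla^{\pi\perp}_{W}$ counterparts; the derivative $\nabla^{B}_{\pi_{*}W}(\pi_{*}Z)$ is handled by the definition of the second fundamental form, yielding $\pi_{*}(\mathcal{H}\nabla^{M}_{W}Z)$ in $range\pi_{*}$ together with $(\nabla\pi_{*})(W,Z)$, which the Lemma places in $(range\pi_{*})^{\perp}$; and the three $\nabla^{B}_{U}(\cdot)$ terms are reduced using the hypothesis that $(range\pi_{*})^{\perp}$ is totally geodesic, so that $\nabla^{B}_{U}(\psi(\pi_{*}W))$ and $\nabla^{B}_{U}(\mathcal{C}U)$ remain in $(range\pi_{*})^{\perp}$.

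For the right-hand side I would expand $(\nabla_{\dot\Omega}\psi)\dot\Omega$ through the trans-Sasakian identity for $\nabla\psi$, and simplify it with $g_{2}(\psi\dot\Omega,\dot\Omega)=0$ and $\eta(\dot\Omega)=\eta(U)$ to obtain $\alpha\|\dot\Omega\|^{2}\xi-\eta(U)[\alpha\dot\Omega+\beta\psi\dot\Omega]$, after which I re-expand $\dot\Omega$ and $\psi\dot\Omega$ in their components to produce exactly the $\eta(U)$-weighted terms appearing in the statement. Projecting the resulting identity orthogonally onto $range\pi_{*}$ then gives the tangential equation, and projecting onto $(range\pi_{*})^{\perp}$ gives the normal equation; collecting terms reproduces the two displayed relations.

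The step I expect to be the real obstacle is the converse direction. The computation above only controls $\psi(\nabla^{B}_{\dot\Omega}\dot\Omega)$, so the two displayed equations are equivalent to $\psi(\nabla^{B}_{\dot\Omega}\dot\Omega)=0$, i.e. to $\nabla^{B}_{\dot\Omega}\dot\Omega=\eta(\nabla^{B}_{\dot\Omega}\dot\Omega)\,\xi$, not yet to $\nabla^{B}_{\dot\Omega}\dot\Omega=0$. To close the equivalence one must show the Reeb component vanishes; I would do this by pairing $\nabla^{B}_{\dot\Omega}\dot\Omega$ with $\xi$ and rewriting $g_{2}(\nabla^{B}_{\dot\Omega}\dot\Omega,\xi)$ via metric compatibility and the structure identity $\nabla^{B}_{\dot\Omega}\xi=-\alpha\psi\dot\Omega+\beta(\dot\Omega-\eta(\dot\Omega)\xi)$, the careful bookkeeping of this Reeb direction being the delicate part of the argument.
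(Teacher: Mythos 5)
Your proposal is correct and follows essentially the same route as the paper: the paper likewise applies the trans-Sasakian identity for $\nabla\psi$ with both arguments equal to $\dot{\Omega}$ to obtain $\psi\nabla^{B}_{\dot{\Omega}}\dot{\Omega}=\nabla^{B}_{\dot{\Omega}}\psi\dot{\Omega}-\alpha[\|\dot{\Omega}\|^{2}\xi-\eta(\dot{\Omega})\dot{\Omega}]-\beta[g_{2}(\psi\dot{\Omega},\dot{\Omega})\xi-\eta(\dot{\Omega})\psi\dot{\Omega}]$, expands $\nabla^{B}_{\dot{\Omega}}\psi\dot{\Omega}$ along $\dot{\Omega}=\pi_{*}W+U$ using $\nabla^{B}_{\pi_{*}W}V=-\mathcal{A}_{V}\pi_{*}W+\nabla^{\pi\perp}_{W}V$, the splitting $\psi V=\mathcal{B}V+\mathcal{C}V$, the second fundamental form with $\pi_{*}Z=\mathcal{B}U$ (justifying $\nabla^{B}_{U}\mathcal{B}U\in\Gamma(range\pi_{*})$ exactly as you do, via the standing total-geodesy assumption on $(range\pi_{*})^{\perp}$), and then separates the $range\pi_{*}$- and $(range\pi_{*})^{\perp}$-components. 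The only place you genuinely depart from the paper is the converse, and there your scruple is warranted: the paper passes from the two displayed conditions to $\psi(\nabla^{B}_{\dot{\Omega}}\dot{\Omega})=0$ and treats this as if it were $\nabla^{B}_{\dot{\Omega}}\dot{\Omega}=0$, silently discarding the kernel direction $\xi$ of $\psi$. Carrying out the pairing with $\xi$ that you sketch, using metric compatibility and $\nabla^{B}_{\dot{\Omega}}\xi=-\alpha\psi\dot{\Omega}+\beta(\dot{\Omega}-\eta(\dot{\Omega})\xi)$, gives $\eta(\nabla^{B}_{\dot{\Omega}}\dot{\Omega})=\frac{d}{ds}\eta(U)-\beta\|\psi\dot{\Omega}\|^{2}$, and this quantity is not a formal consequence of the two displayed equations; so the ``delicate part'' you flag is a real residual condition (vacuous, e.g., when $\beta=0$ and $\eta(U)$ is constant along $\Omega$, as in the $\alpha$-Sasakian and cosymplectic corollaries) that the paper's own proof does not address. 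In short: same computation as the paper, with a more honest account of the converse direction.
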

 
 \begin{proof}
 
 Let  $U\in \Gamma((rangeF_{*})^{\perp})$ and $W~ \in \Gamma ((kerF_{*})^\perp)$ such that $\dot{\Omega}(s)=\pi_{*}W(s)+U(s)$. Since $B$ is a trans-Sasakian manifold, using
\eqref{4}, we get
 \begin{align*}
\psi\nabla^B_{\dot{\Omega}}\dot{\Omega}=\nabla^B_{\dot{\Omega}}\psi\dot{\Omega}-\alpha[g_2(\dot{\Omega},\dot{\Omega})\xi-\eta(\dot{\Omega})\dot{\Omega}]-\beta[g_2(\psi\dot{\Omega},\dot{\Omega})\xi-\eta(\dot{\Omega})\psi\dot{\Omega}].
 \end{align*}
 Since $\dot{\Omega}=\pi_{*}W+U$, the above equation can be rewritten as
 
 \begin{multline}\label{a}
\psi\nabla^B_{\dot{\Omega}}\dot{\Omega}=\nabla_{\pi_{*}W}^B\psi\pi_{*}W+\nabla_{\pi_{*}W}^B\psi U+\nabla_{U}^B\psi\pi_{*}W+\nabla_{U}^B\psi U-\alpha[||\dot{\Omega}||^2\xi-\eta(U)\pi_{*}W-\eta(U)U]\\+\beta[\eta(U)\psi(\pi_{*}W)+\eta(U)\psi U].
 \end{multline}
Using \eqref{6} and \eqref{8} in \eqref{a}, we get
\begin{equation}\label{b}
\begin{split}
\psi\nabla^B_{\dot{\Omega}}\dot{\Omega}&=-\mathcal{A}_{\psi\pi_{*}W}\pi_{*}W+\nabla^{\pi\perp}_W\psi(\pi_{*}W)-\mathcal{A}_{\mathcal{C}U}\pi_{*}W+\nabla_W^{\pi\perp}\mathcal{C}U+\nabla_{\pi_{*}W}^B\mathcal{B}U+\nabla_U^B\psi(\pi_{*}W)\\&+\nabla_U^B\mathcal{B}U+\nabla_U^B\mathcal{C}U-\alpha[||\dot{\Omega}||^2\xi-\eta(U)\pi_{*}W-\eta(U)U]+\beta[\eta(U)\psi(\pi_{*}W)+\eta(U)\psi U],\\
&=-\mathcal{A}_{\psi\pi_{*}W}\pi_{*}W-\mathcal{A}_{\mathcal{C}U}\pi_{*}W+\nabla^{\pi\perp}_W\psi(\pi_{*}W)+\nabla_W^{\pi\perp}\mathcal{C}U+\nabla_U^{\pi\perp}\psi(\pi_{*}W)+\nabla_U^{\pi\perp}\mathcal{C}U\\&+\nabla_{\pi_{*}W}^B\mathcal{B}U+\nabla_U^B\mathcal{B}U-\alpha[||\dot{\Omega}||^2\xi-\eta(U)\pi_{*}W-\eta(U)U]+\beta[\eta(U)\psi(\pi_{*}W)+\eta(U)\psi U].
\end{split}
\end{equation}
 Since $g(\mathcal{B}U,V)=0$ for any $V\in\Gamma((range\pi_{*})^\perp)$, therefore $\nabla_U^B\mathcal{B}U\in \Gamma(range\pi_{*})$. Let $Z\in\Gamma((ker\pi)^\perp)$ such that $\ \pi_{*}Z=\mathcal{B}U$, then using \eqref{5} in \eqref{b}, we have
 \begin{equation}
\begin{split}
\psi\nabla^B_{\dot{\Omega}}\dot{\Omega}&=-\mathcal{A}_{\psi\pi_{*}W}\pi_{*}W-\mathcal{A}_{\mathcal{C}U}\pi_{*}W+\nabla^{\pi\perp}_W\psi(\pi_{*}W)+\nabla_W^{\pi\perp}\mathcal{C}U+\nabla_U^{\pi\perp}\psi(\pi_{*}W)+\nabla_U^{\pi\perp}\mathcal{C}U\\&+(\nabla\pi_{*})(W,Z)+\pi_{*}(\mathcal{H}\nabla^M_WZ)+\nabla^B_U\pi_{*}Z-\alpha[||\dot{\Omega}||^2\xi-\eta(U)\pi_{*}W-\eta(U)U]\\&+\beta\eta(U)\psi(\pi_{*}W)+\beta\eta(U)\psi U.
\end{split}
 \end{equation} 
Since, $\Omega$ is geodesic, $\nabla_{\dot{\Omega}}^B\dot{\Omega}=0$. Separating vertical and horizontal parts of the above equation we get \eqref{9} and \eqref{10}.
 \end{proof}
 
 \begin{corollary}
Let $\pi: M^m\rightarrow B^b$ be an anti-invariant Riemannian map from a Riemannian manifold $(M^m,g_1)$ to a trans-Sasakian manifold $(B,g_2,\psi,\eta,\xi)$ of type $(\alpha,0)$ with horizontal Reeb vector field $\xi$ and $\gamma:J\subset\mathbb{R}\rightarrow M$ be a geodesic on $M$, then the curve $\Omega =\pi\circ\gamma$ is a geodesic on $B$ if and only if
	\begin{equation*}
 	-\mathcal{A}_{\psi \pi_{*}W}\pi_{*}W	-\mathcal{A}_{\mathcal{C}U}\pi_{*} W+\pi_{*}(\mathcal{H}\nabla_{W}^MZ)+\nabla_{U}^B\pi_{*}Z
 	+\eta(U)\alpha \pi_{*}W=0,
 	\end{equation*}
 	\begin{multline*}
 	\nabla_W^{\pi\perp}\psi(\pi_{*}W)+\nabla_W^{\pi \perp}\mathcal{C}Z+\nabla_U^{\pi\perp}\psi(\pi_{*}W)+\nabla_U^{\pi\perp}\mathcal{C}U+(\nabla\pi_{*})(W,Z)+\alpha[\eta(U)U-||\dot{\Omega}||^2\xi]=0
 	\end{multline*} 
 	for any  $U\in \Gamma((rangeF_{*})^{\perp})$ and $W,Z~ \in \Gamma ((kerF_{*})^\perp)$ such that $\pi_{*}Z=\mathcal{B}U$ with $\pi_{*}W$ and $U$ as vertical and horizontal components of $\dot{\Omega}(s).$
 \end{corollary}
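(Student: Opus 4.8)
The plan is to obtain this corollary as a direct specialization of the preceding theorem rather than repeating the computation from scratch. A trans-Sasakian manifold of type $(\alpha,0)$ is, by the definitions recalled in Section~2, precisely a trans-Sasakian manifold of type $(\alpha,\beta)$ in which the smooth function $\beta$ vanishes identically (equivalently, an $\alpha$-Sasakian manifold). Since the hypotheses on $\pi$, on the horizontal Reeb vector field $\xi$, and on the geodesic $\gamma$ are identical to those of the theorem, the equivalence characterizing when $\Omega=\pi\circ\gamma$ is a geodesic carries over verbatim once we set $\beta=0$; in particular both directions of the ``if and only if'' are inherited automatically.

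Concretely, I would substitute $\beta=0$ into the two defining equations \eqref{9} and \eqref{10}. In \eqref{9} the only $\beta$-dependent term is the summand $\beta\,\eta(U)\pi_{*}Z$ sitting inside the bracket $\eta(U)[\alpha\pi_{*}W+\beta\pi_{*}Z]$; it drops out, leaving $\eta(U)\alpha\pi_{*}W$, which is exactly the first displayed equation of the corollary. In \eqref{10} the $\beta$-dependent contribution is $\beta\,\eta(U)[\psi(\pi_{*}W)+\mathcal{C}U]$, which likewise vanishes, and the two surviving $\alpha$-terms $-\alpha||\dot{\Omega}||^2\xi$ and $\eta(U)\alpha U$ are then collected as $\alpha[\eta(U)U-||\dot{\Omega}||^2\xi]$, reproducing the second displayed equation.

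Should one prefer a self-contained argument, the proof of the theorem applies line for line: expand $\psi\nabla^B_{\dot{\Omega}}\dot{\Omega}$ using the structure equation \eqref{4} with $\beta=0$, decompose the resulting covariant derivatives via \eqref{6} and \eqref{8}, introduce $Z$ with $\pi_{*}Z=\mathcal{B}U$ and apply \eqref{5}, then separate vertical and horizontal components under the geodesic condition $\nabla^B_{\dot{\Omega}}\dot{\Omega}=0$. I anticipate no genuine obstacle here; the sole point demanding a moment of care is the bookkeeping in the horizontal component, namely recognizing that the two remaining $\alpha$-terms combine into the single bracket $\alpha[\eta(U)U-||\dot{\Omega}||^2\xi]$. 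Everything else follows immediately from the theorem.
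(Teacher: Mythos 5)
Your proposal is correct and matches the paper's own treatment: the paper states this corollary without a separate proof, precisely because it is the $\beta=0$ specialization of Theorem 3.1, which is exactly the route you take. Your bookkeeping of the vanishing terms, namely $\beta\,\eta(U)\pi_{*}Z$ in the vertical equation and $\beta\,\eta(U)[\psi(\pi_{*}W)+\mathcal{C}U]$ in the horizontal one, with the surviving $\alpha$-terms collected as $\alpha[\eta(U)U-||\dot{\Omega}||^2\xi]$, is accurate.
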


\begin{corollary}
Let $\pi: M^m\rightarrow B^b$ be an anti-invariant Riemannian map from a Riemannian manifold $(M^m,g_1)$ to a trans-Sasakian manifold $(B,g_2,\psi,\eta,\xi)$ of type $(0,\beta)$ with horizontal Reeb vector field $\xi$ and $\gamma:J\subset\mathbb{R}\rightarrow M$ be a geodesic on $M$, then the curve $\Omega =\pi\circ\gamma$ is a geodesic on $B$ if and only if
	\begin{equation*}
 	-\mathcal{A}_{\psi \pi_{*}W}\pi_{*}W	-\mathcal{A}_{\mathcal{C}U}\pi_{*} W+\pi_{*}(\mathcal{H}\nabla_{W}^MZ)+\nabla_{U}^B\pi_{*}Z
 	+\beta\eta(U)\pi_{*}Z]=0,
 	\end{equation*}
 	\begin{multline*}
 	\nabla_W^{\pi\perp}\psi(\pi_{*}W)+\nabla_W^{\pi \perp}\mathcal{C}Z+\nabla_U^{\pi\perp}\psi(\pi_{*}W)+\nabla_U^{\pi\perp}\mathcal{C}U+(\nabla\pi_{*})(W,Z)+\beta\eta(U)[\psi(\pi_{*}W)+\mathcal{C}U]=0
 	\end{multline*} 
 	for any  $U\in \Gamma((rangeF_{*})^{\perp})$ and $W,Z~ \in \Gamma ((kerF_{*})^\perp)$ such that $\pi_{*}Z=\mathcal{B}U$ with $\pi_{*}W$ and $U$ as vertical and horizontal components of $\dot{\Omega}(s).$
\end{corollary}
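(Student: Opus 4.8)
The plan is to obtain this corollary as the direct $\alpha = 0$ specialization of Theorem 3.3, rather than repeating the full computation. Since a trans-Sasakian manifold of type $(0,\beta)$ is by definition a $\beta$-Kenmotsu manifold, it belongs to the trans-Sasakian family with $\alpha \equiv 0$; hence the hypotheses of Theorem 3.3 are satisfied and its conclusion applies verbatim. First I would record that the equivalence between $\Omega = \pi\circ\gamma$ being a geodesic on $B$ and the two characterizing equations holding is exactly the content of Theorem 3.3, using the same vertical/horizontal splitting $\dot{\Omega}(s) = \pi_{*}W(s) + U(s)$ and the same $Z \in \Gamma((ker\pi_{*})^\perp)$ chosen so that $\pi_{*}Z = \mathcal{B}U$.

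Next I would substitute $\alpha = 0$ into the first equation \eqref{9}. The only $\alpha$-dependent contribution there is the summand $\alpha\,\eta(U)\,\pi_{*}W$ inside the bracket $\eta(U)[\alpha\pi_{*}W + \beta\pi_{*}Z]$; deleting it leaves $\beta\eta(U)\pi_{*}Z$, while every other term is $\alpha$-free and is carried over unchanged. This produces the first displayed equation of the corollary.

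Then I would perform the analogous substitution in the second (multline) equation of Theorem 3.3. Setting $\alpha = 0$ here removes two summands: the term $-\alpha\|\dot{\Omega}\|^2\xi$, which is the single place where $\alpha$ enters without an accompanying factor $\eta(U)$, and the $\alpha U$ contribution inside $\eta(U)[\alpha U + \beta(\psi(\pi_{*}W) + \mathcal{C}U)]$. What survives is $\beta\eta(U)[\psi(\pi_{*}W) + \mathcal{C}U]$ together with the unchanged connection and second-fundamental-form terms, which is precisely the second displayed equation.

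Since every step in the derivation of Theorem 3.3 flows from the defining identity \eqref{4} for $(\nabla_W\psi)Z$, and that identity reduces continuously to its $\beta$-Kenmotsu form when $\alpha = 0$, the equivalence is inherited with no new input. There is no substantive obstacle; the only care required is bookkeeping, namely confirming that each $\alpha$-proportional term is indeed discarded and, in particular, that the $\xi$-term vanishes entirely in the Kenmotsu case. As an optional cross-check I would re-run the computation directly from the Kenmotsu structure equation $(\nabla_W\psi)Z = \beta[g_2(\psi W,Z)\xi - \eta(Z)\psi W]$, which should reproduce the same two equations and thereby confirm the specialization.
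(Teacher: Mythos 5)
Your proposal is correct and matches the paper's intent exactly: the paper states this corollary without a separate proof, treating it as the immediate $\alpha=0$ specialization of the theorem for type $(\alpha,\beta)$, which is precisely the substitution and bookkeeping you carry out. Your identification of which terms drop (the $\alpha\,\eta(U)\pi_{*}W$ summand in the first equation, and both $-\alpha\|\dot{\Omega}\|^{2}\xi$ and the $\alpha U$ contribution in the second) is accurate, so nothing further is needed.
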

 
 \begin{corollary}
 Let $\pi: M^m\rightarrow B^b$ be an anti-invariant Riemannian map from a Riemannian manifold $(M^m,g_1)$ to a trans-Sasakian manifold $(B,g_2,\psi,\eta,\xi)$ of type $(0,0)$ with horizontal Reeb vector field $\xi$ and $\gamma:J\subset\mathbb{R}\rightarrow M$ be a geodesic on $M$, then the curve $\Omega =\pi\circ\gamma$ is a geodesic on $B$ if and only if
 	\begin{equation*}
  	-\mathcal{A}_{\psi \pi_{*}W}\pi_{*}W	-\mathcal{A}_{\mathcal{C}U}\pi_{*} W+\pi_{*}(\mathcal{H}\nabla_{W}^MZ)+\nabla_{U}^B\pi_{*}Z=0,
  	\end{equation*}
  	\begin{equation}
  	\nabla_W^{\pi\perp}\psi(\pi_{*}W)+\nabla_W^{\pi \perp}\mathcal{C}Z+\nabla_U^{\pi\perp}\psi(\pi_{*}W)+\nabla_U^{\pi\perp}\mathcal{C}U+(\nabla\pi_{*})(W,Z)=0
  	\end{equation} 
  	for any  $U\in \Gamma((rangeF_{*})^{\perp})$ and $W,Z~ \in \Gamma ((kerF_{*})^\perp)$ such that $\pi_{*}Z=\mathcal{B}U$ with $\pi_{*}W$ and $U$ as vertical and horizontal components of $\dot{\Omega}(s).$
 \end{corollary}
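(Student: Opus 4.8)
The plan is to obtain this corollary not by a fresh computation but as an immediate specialization of the preceding Theorem. Recall from the Preliminaries that a trans-Sasakian manifold of type $(0,0)$ is precisely a cosymplectic manifold, so the two structure functions satisfy $\alpha\equiv 0$ and $\beta\equiv 0$ on $B$; this is the only structural fact I need, and it is read off directly from the defining identity \eqref{4}. Since the hypotheses on the anti-invariant Riemannian map $\pi$, on the horizontal Reeb vector field $\xi$, and on the curves $\gamma$ and $\Omega=\pi\circ\gamma$ coincide verbatim with those of the Theorem, I may apply its conclusion as stated and then substitute these vanishing values.

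Concretely, I would begin from the two characterizing equations of the Theorem, namely the vertical equation \eqref{9} and its horizontal companion \eqref{10}, which together express the geodesic condition $\nabla^{B}_{\dot{\Omega}}\dot{\Omega}=0$ through the vertical and horizontal projections of $\psi\,\nabla^{B}_{\dot{\Omega}}\dot{\Omega}$ for a general type $(\alpha,\beta)$. Setting $\alpha=0$ and $\beta=0$ in \eqref{9} annihilates the term $\eta(U)[\alpha\,\pi_{*}W+\beta\,\pi_{*}Z]$, leaving exactly the first displayed equation of the corollary. Imposing the same values in \eqref{10} removes the term $-\alpha||\dot{\Omega}||^{2}\xi$ together with the whole bracket $\eta(U)[\alpha U+\beta(\psi(\pi_{*}W)+\mathcal{C}U)]$, so that only $\nabla_W^{\pi\perp}\psi(\pi_{*}W)+\nabla_W^{\pi\perp}\mathcal{C}Z+\nabla_U^{\pi\perp}\psi(\pi_{*}W)+\nabla_U^{\pi\perp}\mathcal{C}U+(\nabla\pi_{*})(W,Z)$ survives, recovering the second displayed equation.

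Because the Theorem asserts an honest biconditional between $\Omega$ being a geodesic and the simultaneous vanishing of the expressions in \eqref{9} and \eqref{10}, and since substituting the constant values $\alpha=\beta=0$ into those identities preserves logical equivalence, both implications of the corollary follow at once from the corresponding implications of the Theorem. I do not anticipate any genuine obstacle here, as the argument is a pure specialization; the only point requiring care is to confirm, via \eqref{4}, that the type $(0,0)$ assumption legitimately forces $\alpha\equiv 0$ and $\beta\equiv 0$, which is already recorded in the Preliminaries, so that dropping the $\alpha$- and $\beta$-terms is fully justified rather than merely formal.
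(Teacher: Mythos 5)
Your proposal is correct and matches the paper's intent exactly: the paper states this corollary without a separate proof, treating it precisely as the specialization of Theorem 3.1 obtained by setting $\alpha=\beta=0$ in equations \eqref{9} and \eqref{10}, which is what you do. Your substitution correctly annihilates the terms $\eta(U)[\alpha\,\pi_{*}W+\beta\,\pi_{*}Z]$, $-\alpha||\dot{\Omega}||^{2}\xi$ and $\eta(U)[\alpha U+\beta(\psi(\pi_{*}W)+\mathcal{C}U)]$, yielding both displayed equations of the corollary, and the biconditional is preserved since the Theorem holds for arbitrary type $(\alpha,\beta)$.
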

  
 \begin{theorem}
 Let $\pi: M^m\rightarrow B^b$ be an anti-invariant Riemannian map from a Riemannian manifold $(M^m,g_1)$ to a trans-Sasakian manifold $(B^b,g_2,\psi,\eta,\xi)$ of type $(\alpha,\beta)$ with horizontal Reeb vector field $\xi$. Let $\gamma$ and $\Omega$ be geodesics on $M$ and $B$ respectively. Then $\pi$ is a Clairaut anti-invariant Riemannian map with $r=e^h$ if and only if
\begin{equation}\label{ad3}
\begin{split}
g_2(\pi_{*}W,\pi_{*}W)\frac{d(h\circ \Omega)}{ds}&=g_2(\mathcal{A}_{\psi\pi_{*}W}\pi_{*}W,\pi_{*}Z)-g_2(\nabla_W^{\pi\perp}\psi(\pi_{*}W)+\nabla_U^{\pi\perp}\psi(\pi_{*}W),\mathcal{C}U)\\&-\eta(U)[\alpha g_1(W,Z)+\beta||\psi U||^2],
\end{split}
\end{equation}
where $U\in \Gamma((range\pi_{*})^\perp)$ and $W,Z\in \Gamma((ker\pi_{*})^\perp)$ such that $\pi_{*}Z=\mathcal{B}U$. Also $\pi_{*}W$ and $U$ are vertical and horizontal part of $\dot{\Omega}(s)$ respectively and $h$ is a smooth function on $B$. 
 \end{theorem}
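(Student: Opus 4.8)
The plan is to first convert the Clairaut property into a single pointwise identity along $\Omega$, and then to evaluate that identity using the trans-Sasakian computation already carried out in the preceding theorem.

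I would begin with the kinematics of the geodesic. As $\Omega=\pi\circ\gamma$ is a geodesic, its speed $a=\|\dot{\Omega}\|$ is constant; writing $\dot{\Omega}=\pi_{*}W+U$ with $\pi_{*}W\in\Gamma(range\pi_{*})$ and $U\in\Gamma((range\pi_{*})^{\perp})$, and letting $\theta$ be the angle between $\dot{\Omega}$ and its $(range\pi_{*})^{\perp}$-part, one has $\cos\theta=\|U\|/a$ and $\sin\theta=\|\pi_{*}W\|/a$, so that $g_2(\pi_{*}W,\pi_{*}W)=a^{2}\sin^{2}\theta$. Differentiating $r\circ\Omega=e^{h\circ\Omega}$, the map is Clairaut exactly when $\frac{d(h\circ\Omega)}{ds}\sin\theta+\cos\theta\,\frac{d\theta}{ds}=0$ for every geodesic $\Omega$. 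On the other hand, differentiating $g_2(\pi_{*}W,\pi_{*}W)=a^{2}\sin^{2}\theta$ gives $g_2(\nabla^{B}_{\dot{\Omega}}\pi_{*}W,\pi_{*}W)=a^{2}\sin\theta\cos\theta\,\frac{d\theta}{ds}$. Eliminating $\frac{d\theta}{ds}$ between these two relations turns the Clairaut condition into the equivalent identity
\[
g_2(\pi_{*}W,\pi_{*}W)\,\frac{d(h\circ\Omega)}{ds}=-\,g_2(\nabla^{B}_{\dot{\Omega}}\pi_{*}W,\pi_{*}W),
\]
with the instants $\sin\theta=0$ (where $\pi_{*}W=0$) handled by continuity. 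This reduction is the conceptual heart; what remains is to expand the right-hand side.

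To evaluate $-g_2(\nabla^{B}_{\dot{\Omega}}\pi_{*}W,\pi_{*}W)$, I would reuse the expansion of $\psi\nabla^{B}_{\dot{\Omega}}\dot{\Omega}$ from the preceding theorem rather than differentiate $\|\pi_{*}W\|^{2}$ in isolation (the latter is circular, since $\psi$ is an isometry on $\pi_{*}W$ and $\eta(\pi_{*}W)=0$). Concretely, since $\Omega$ is a geodesic the horizontal part of that expansion vanishes; pairing this vanishing horizontal identity with $\psi(\pi_{*}W)\in\Gamma((range\pi_{*})^{\perp})$ and discarding the pieces that are $g_2$-orthogonal to it isolates $g_2(\nabla^{B}_{\dot{\Omega}}\pi_{*}W,\pi_{*}W)=g_2(\nabla^{\pi\perp}_{W}\psi(\pi_{*}W)+\nabla^{\pi\perp}_{U}\psi(\pi_{*}W),\psi(\pi_{*}W))$ together with the remaining contributions. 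I would then simplify those contributions using four facts: the duality $g_2(\mathcal{A}_{\psi\pi_{*}W}\pi_{*}W,\pi_{*}Z)=g_2((\nabla\pi_{*})(W,Z),\psi(\pi_{*}W))$ to produce the first right-hand term; the metric property of $\nabla^{\pi\perp}$ together with the identical vanishing of $g_2(\mathcal{C}U,\psi(\pi_{*}W))$ to transfer the derivative from $\mathcal{C}U$ onto $\psi(\pi_{*}W)$ and obtain $-g_2(\nabla^{\pi\perp}_{W}\psi(\pi_{*}W)+\nabla^{\pi\perp}_{U}\psi(\pi_{*}W),\mathcal{C}U)$; the splitting $\psi V=\mathcal{B}V+\mathcal{C}V$ of \eqref{8} with $\pi_{*}Z=\mathcal{B}U$ and the Riemannian-map isometry \eqref{19} to evaluate $g_2(U,\psi(\pi_{*}W))$ in terms of $g_1(W,Z)$; and the contact relations $\eta\circ\psi=0$, $g_2(\xi,\psi(\pi_{*}W))=0$ to kill the $\xi$-directions. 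The orthogonality $g_2(\mathcal{C}U,\psi(\pi_{*}W))=0$ is itself read off from the decomposition $(range\pi_{*})^{\perp}=\psi\,range\pi_{*}\oplus\mu$: one checks $\mathcal{C}U\in\mu$ while $\psi(\pi_{*}W)\in\psi\,range\pi_{*}$.

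I expect the main obstacle to be the bookkeeping of the Reeb contributions. Every $\alpha$- and $\beta$-term in the trans-Sasakian identity \eqref{4} carries a factor $\eta(U)$ and is distributed across $range\pi_{*}$ and $(range\pi_{*})^{\perp}$ through $\psi U=\mathcal{B}U+\mathcal{C}U$ and $\psi(\pi_{*}W)$; tracking which summand survives each projection, and confirming that $(\nabla_{\dot{\Omega}}\psi)\pi_{*}W$ (a multiple of $\xi$) drops out against $\psi(\pi_{*}W)$, is exactly where the final term $-\eta(U)[\alpha g_1(W,Z)+\beta\|\psi U\|^{2}]$ is pinned down and where sign errors are most likely. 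A clean way to control this is to separate the $range\pi_{*}$- and $(range\pi_{*})^{\perp}$-projections of the geodesic expansion at the outset (as in \eqref{9} and its horizontal companion) and to pair only the horizontal projection with $\psi(\pi_{*}W)$, so that the vertical $\mathcal{A}$-terms enter solely through the duality identity and not through spurious cross terms.
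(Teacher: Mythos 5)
Your reduction of the Clairaut condition to $g_2(\pi_{*}W,\pi_{*}W)\frac{d(h\circ\Omega)}{ds}=-g_2(\nabla^{B}_{\dot{\Omega}}\pi_{*}W,\pi_{*}W)$ is exact, and most of your evaluation steps are sound: the duality $g_2(\mathcal{A}_{\psi\pi_{*}W}\pi_{*}W,\pi_{*}Z)=g_2((\nabla\pi_{*})(W,Z),\psi\pi_{*}W)$, the transfer of $\nabla^{\pi\perp}$ off $\mathcal{C}U$ via $g_2(\mathcal{C}U,\psi\pi_{*}W)\equiv 0$, the disappearance of $(\nabla_{\dot{\Omega}}\psi)\pi_{*}W$ against $\psi\pi_{*}W$, and $g_2(U,\psi\pi_{*}W)=-g_1(W,Z)$ producing the $\alpha$-term. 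The gap is in your final claim that this bookkeeping ``pins down'' $-\eta(U)[\alpha g_1(W,Z)+\beta\|\psi U\|^{2}]$: pairing the horizontal geodesic condition of Theorem 3.1 with $\psi(\pi_{*}W)$ cannot produce $\|\psi U\|^{2}$ at all. The only $\beta$-contribution surviving that pairing is $\beta\eta(U)g_2(\psi\pi_{*}W,\psi\pi_{*}W)=\beta\eta(U)\|\pi_{*}W\|^{2}$ (the summand $\beta\eta(U)\mathcal{C}U$ dies since $\mathcal{C}U\in\mu$ is orthogonal to $\psi\,range\pi_{*}$), so your route terminates at the right-hand side
\[
g_2(\mathcal{A}_{\psi\pi_{*}W}\pi_{*}W,\pi_{*}Z)-g_2\bigl(\nabla^{\pi\perp}_{W}\psi(\pi_{*}W)+\nabla^{\pi\perp}_{U}\psi(\pi_{*}W),\mathcal{C}U\bigr)-\eta(U)\,\alpha\,g_1(W,Z)+\eta(U)\,\beta\,\|\pi_{*}W\|^{2},
\]
whose $\beta$-term differs from \eqref{ad3} in both sign and magnitude. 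A term $\|\psi U\|^{2}=\|\mathcal{B}U\|^{2}+\|\mathcal{C}U\|^{2}$ can only arise by pairing against $\psi U$ itself, which your scheme never does.

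The paper's proof takes the complementary pairing: it differentiates $g_2(U,U)=c\cos^{2}\theta$, rewrites this as $2g_2(\nabla_{\dot{\Omega}}\psi U,\psi U)$, expands $\psi U=\mathcal{B}U+\mathcal{C}U$ using \eqref{5}, \eqref{6}, \eqref{8}, and then substitutes \emph{both} geodesic conditions of Theorem 3.1; there the $\beta$-term emerges as $-\beta\eta(U)(\|\pi_{*}Z\|^{2}+\|\mathcal{C}U\|^{2})=-\beta\eta(U)\|\psi U\|^{2}$. The deeper point is that the two endpoints differ by exactly $\eta(U)\frac{d}{ds}\eta(U)=\beta\eta(U)\bigl(\|\dot{\Omega}\|^{2}-\eta(U)^{2}\bigr)$, which is precisely the term the paper discards when it asserts $\frac{d}{ds}g_2(U,U)=2g_2(\nabla_{\dot{\Omega}}\psi U,\psi U)$ (these differ by $\frac{d}{ds}\eta(U)^{2}$, and $\eta(\dot{\Omega})$ is \emph{not} constant along geodesics unless $\beta=0$, since $\frac{d}{ds}\eta(\dot{\Omega})=\beta\|\psi\dot{\Omega}\|^{2}$). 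So your derivation, being exact, is genuinely incompatible with the stated identity when $\beta\neq 0$; as written it proves the theorem only in the $\beta=0$ cases ($\alpha$-Sasakian, cosymplectic), and to reproduce \eqref{ad3} verbatim you would have to switch to the paper's pairing with $\psi U$ and accept the same suppressed $\frac{d}{ds}\eta(U)$ term.
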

 \begin{proof}
 Let $\gamma:J\subset \mathbb{R}\rightarrow M$ and $\Omega=\pi\circ \gamma$ be geodesics on $M$ and $B$ respectively such that $\dot{\Omega}(s)=\pi_{*}W(s)+U(s)$, where $\pi_{*}W\in \Gamma(range\pi_{*})$ and $U\in\Gamma((range\pi_{*})^\perp)$.\\
 Considering $||\dot{\Omega}(s)||^2=c$, we have
 \begin{eqnarray}
g_{2\dot{\Omega}(s)}(U,U)=c cos^2\theta(s),\label{c}\\
g_{2\dot{\Omega}(s)}(\pi_{*}W,\pi_{*}W)=c sin^2\theta(s), \label{d}
 \end{eqnarray}
where $\theta(s)\in[0,\pi]$ is the angle between $\dot{\Omega}$ and $U$. Differentiating \eqref{c} along $\Omega$, we get
\begin{equation}\label{e}
\frac{d}{ds}g_2(U,U)=2g_2(\nabla_{\dot{\Omega}}^BU,U)=-2ccos\theta sin\theta \frac{d\theta}{ds}.
\end{equation} 
 Since $B$ is a trans-Sasakian manifold, \eqref{e} can be written as
  \begin{equation}\label{f}
  2g_2(\nabla_{\dot{\Omega}}\psi U,\psi U)=2g_2(\nabla_{\pi_{*}W+U}^B\psi U,\psi U)=-2ccos\theta sin\theta \frac{d\theta}{ds}.
   \end{equation}
   Now, using \eqref{8} and \eqref{6} in \eqref{f}, we get
 \begin{equation}\label{g}
g_2(\nabla_{\pi_{*}W}^B\mathcal{B}U-\mathcal{A}_{\mathcal{C}U}\pi_{*}W+\nabla_{W}^{\pi\perp}\mathcal{C}U+\nabla_U^B\mathcal{B}U+\nabla_U^\perp \mathcal{C}U,\psi U)=-ccos\theta sin\theta \frac{d\theta}{ds}.
 \end{equation}
Let $Z\in\Gamma((ker\pi_{*})^\perp)$ such that $\pi_{*}Z=\mathcal{B}U$, using \eqref{5} in \eqref{g} we have
\begin{equation}\label{h}
g_2((\nabla^B\pi_{*})(W,Z)+\pi_{*}(\nabla^M_WZ)-\mathcal{A}_{\mathcal{C}U}\pi_{*}W+\nabla_{W}^{\pi\perp}\mathcal{C}U+\nabla_U^B\pi_{*}Z+\nabla_U^\perp \mathcal{C}U,\psi U)=-ccos\theta sin\theta \frac{d\theta}{ds}.
\end{equation}
Using \eqref{9} and \eqref{10} in \eqref{h} and simplifying, we obtain
\begin{multline}\label{i}
-ccos\theta sin\theta \frac{d\theta}{ds}=g_2(\mathcal{A}_{\psi\pi_{*}W}\pi_{*}W,\pi_{*}Z)-g_2(\nabla_W^{\pi\perp}\psi(\pi_{*}W)+\nabla^{\pi\perp}_U\psi(\pi_{*}W),\mathcal{C}U)\\-\alpha\eta(U)g_2(W,Z)-\beta\eta(U)||\psi U||^2.
\end{multline}
Further, $\pi$ is a Clairaut Riemannian map with $r=e^h$ if and only if $$\frac{d}{ds}(e^{h\circ \Omega}sin\theta)=0,$$ This implies,
$$e^{h\circ\Omega}sin\theta\frac{d(h\circ\Omega)}{ds}+e^{h\circ\Omega}cos\theta\frac{d\theta}{ds}=0,$$
\begin{equation}\label{j}
csin^2\theta\frac{d(h\circ\Omega)}{ds}=-csin\theta cos\theta\frac{d\theta}{ds}.
\end{equation}
From \eqref{d}, \eqref{i} and \eqref{j}, we get \eqref{ad3}. 
 \end{proof}
\begin{corollary}
 Let $\pi: M^m\rightarrow B^b$ be an anti-invariant Riemannian map from a Riemannian manifold $(M^m,g_1)$ to a trans-Sasakian manifold $(B^b,g_2,\psi,\eta,\xi)$ of type $(\alpha,0)$ with horizontal Reeb vector field $\xi$$\xi$. Let $\gamma$ and $\Omega$ be geodesic on $M$ and $B$ respectively. Therefore $\pi$ is a Clairaut anti-invariant Riemannian map with $r=e^h$ if and only if
 \begin{equation*}
 \begin{split}
 g_2(\pi_{*}W,\pi_{*}W)\frac{d(h\circ \Omega)}{ds}&=g_2(\mathcal{A}_{\psi\pi_{*}W}\pi_{*}W,\pi_{*}Z)-g_2(\nabla_W^{\pi\perp}\psi(\pi_{*}W)+\nabla_U^{\pi\perp}\psi(\pi_{*}W),\mathcal{C}U)\\&-\alpha\eta(U) g_1(W,Z),
 \end{split}
 \end{equation*}
 where $U\in \Gamma((range\pi_{*})^\perp)$ and $W,Z\in \Gamma((ker\pi_{*})^\perp)$ such that $\pi_{*}Z=\mathcal{B}U$. Also $\pi_{*}W$ and $U$ are vertical and horizontal part of $\dot{\Omega}(s)$ respectively and $h$ is a smooth function on $B$. 
\end{corollary}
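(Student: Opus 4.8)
The plan is to obtain this statement as the $\beta=0$ specialization of the preceding Theorem. A trans-Sasakian manifold of type $(\alpha,0)$ is exactly an $\alpha$-Sasakian manifold, i.e.\ the general type $(\alpha,\beta)$ situation with $\beta\equiv0$, and all hypotheses of the preceding Theorem (anti-invariant Riemannian map, horizontal Reeb field, $\gamma$ and $\Omega=\pi\circ\gamma$ geodesics, totally geodesic $(range\pi_*)^\perp$) are assumed here verbatim. Since the structure equation \eqref{4} depends linearly and separately on $\alpha$ and $\beta$, setting $\beta=0$ removes precisely the $\beta$-terms from every intermediate identity in the proof of that Theorem. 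In the characterizing formula \eqref{ad3} the only place $\beta$ occurs is the summand $-\beta\eta(U)\|\psi U\|^2$; deleting it yields exactly the displayed identity. So the cleanest route is simply to invoke \eqref{ad3} and put $\beta=0$.

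For completeness one can also run the argument from scratch, which is the surest way to check that no hidden $\beta$-dependence survives. Writing $\dot\Omega(s)=\pi_*W(s)+U(s)$ with $\pi_*W\in\Gamma(range\pi_*)$ and $U\in\Gamma((range\pi_*)^\perp)$, and using that $\Omega$ is a geodesic so $\|\dot\Omega\|^2=c$ is constant, I would set $g_2(U,U)=c\cos^2\theta$ and $g_2(\pi_*W,\pi_*W)=c\sin^2\theta$ and differentiate the first relation along $\Omega$ to get $g_2(\nabla^B_{\dot\Omega}U,U)=-c\cos\theta\sin\theta\,\tfrac{d\theta}{ds}$. Passing to $\psi U=\mathcal{B}U+\mathcal{C}U$ via \eqref{8}, applying the Riemannian-map formulas \eqref{6} and \eqref{5} with $\pi_*Z=\mathcal{B}U$, and then substituting the geodesic conditions of the $(\alpha,0)$-case (the corresponding Corollary of the first Theorem) into the resulting inner product with $\psi U$, one arrives at the $\beta=0$ analogue of the intermediate identity \eqref{i}, namely $-c\cos\theta\sin\theta\,\tfrac{d\theta}{ds}$ equated to the right-hand side of the claimed formula with $g_2(\pi_*W,\pi_*W)$ replaced by $c\sin^2\theta$.

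Finally I would impose the Clairaut condition: by definition $\pi$ is Clairaut with $r=e^h$ iff $\tfrac{d}{ds}\big(e^{h\circ\Omega}\sin\theta\big)=0$, which expands to $c\sin^2\theta\,\tfrac{d(h\circ\Omega)}{ds}=-c\sin\theta\cos\theta\,\tfrac{d\theta}{ds}$; combining this with the previous display and $g_2(\pi_*W,\pi_*W)=c\sin^2\theta$ gives the stated identity. There is no genuine obstacle beyond careful bookkeeping: the only subtle point, inherited from the parent Theorem, is the replacement of $g_2(\nabla^B_{\dot\Omega}U,U)$ by $g_2(\nabla^B_{\dot\Omega}\psi U,\psi U)$, which must be justified from metric compatibility ($g_2(\psi U,\psi U)=g_2(U,U)-\eta(U)^2$), after which it remains only to track that the $\alpha$-term $-\alpha\eta(U)g_1(W,Z)$ survives while the $\beta$-term vanishes.
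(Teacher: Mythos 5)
Your proposal is correct and matches the paper exactly: the paper states this corollary without a separate proof, as the immediate $\beta=0$ specialization of the preceding Theorem's characterization \eqref{ad3}, which is precisely your primary route, and your backup from-scratch sketch simply retraces the paper's proof of that Theorem with the $\beta$-terms deleted.
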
 
\begin{corollary}
 Let $\pi: M^m\rightarrow B^b$ be an anti-invariant Riemannian map from a Riemannian manifold $(M^m,g_1)$ to a trans-Sasakian manifold $(B^b,g_2,\psi,\eta,\xi)$ of type $(0,\beta)$ with horizontal Reeb vector field $\xi$. Let $\gamma$ and $\Omega$ be geodesic on $M$ and $B$ respectively. Then $\pi$ is a Clairaut anti-invariant Riemannian map with $r=e^h$ if and only if
 \begin{equation*}
 \begin{split}
 g_2(\pi_{*}W,\pi_{*}W)\frac{d(h\circ \Omega)}{ds}&=g_2(\mathcal{A}_{\psi\pi_{*}W}\pi_{*}W,\pi_{*}Z)-g_2(\nabla_W^{\pi\perp}\psi(\pi_{*}W)+\nabla_U^{\pi\perp}\psi(\pi_{*}W),\mathcal{C}U)\\&-\beta\eta(U)||\psi U||^2,
 \end{split}
 \end{equation*}
 where $U\in \Gamma((range\pi_{*})^\perp)$ and $W,Z\in \Gamma((ker\pi_{*})^\perp)$ such that $\pi_{*}Z=\mathcal{B}U$. Also $\pi_{*}W$ and $U$ are vertical and horizontal part of $\dot{\Omega}(s)$ respectively and $h$ is a smooth function on $B$. 
\end{corollary}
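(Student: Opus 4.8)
The plan is to derive this statement as the $\alpha=0$ specialization of the general type-$(\alpha,\beta)$ result, equation \eqref{ad3}. A trans-Sasakian manifold of type $(0,\beta)$ is precisely a $\beta$-Kenmotsu manifold, that is, the $\alpha=0$ instance of a type $(\alpha,\beta)$ structure. Setting $\alpha=0$ in \eqref{ad3}, the bracketed term $-\eta(U)[\alpha g_1(W,Z)+\beta||\psi U||^2]$ collapses to $-\beta\eta(U)||\psi U||^2$, while every remaining term of \eqref{ad3} is manifestly free of $\alpha$ and is therefore unchanged. This reproduces the asserted identity, and since the general theorem is an equivalence, both implications specialize simultaneously.

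To make this rigorous rather than merely formal, I would re-run the proof of the general theorem verbatim with the type-$(0,\beta)$ form of the defining relation \eqref{4}, namely $(\nabla_W\psi)Z=\beta[g_2(\psi W,Z)\xi-\eta(Z)\psi W]$. The skeleton is identical: write $\dot{\Omega}(s)=\pi_{*}W(s)+U(s)$ with $\pi_{*}W\in\Gamma(range\pi_{*})$ and $U\in\Gamma((range\pi_{*})^{\perp})$; use $||\dot{\Omega}||^2=c$ together with $g_2(U,U)=c\cos^2\theta$ and $g_2(\pi_{*}W,\pi_{*}W)=c\sin^2\theta$; differentiate $g_2(U,U)$ along $\Omega$ and recast the result through $\psi U$ via the compatibility identity $g_2(\psi U,\psi U)=g_2(U,U)-\eta(U)^2$. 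One then expands $\nabla^{B}_{\dot{\Omega}}\psi U$ using the decomposition $\psi U=\mathcal{B}U+\mathcal{C}U$ of \eqref{8} and the relation \eqref{6}, introduces $Z\in\Gamma((ker\pi_{*})^{\perp})$ with $\pi_{*}Z=\mathcal{B}U$ through the second fundamental form \eqref{5}, substitutes the type-$(0,\beta)$ geodesic equations (the corresponding corollary of the geodesic characterization, whose vertical part specializes \eqref{9}), and finally combines the Clairaut condition $\frac{d}{ds}\big(e^{h\circ\Omega}\sin\theta\big)=0$ with $g_2(\pi_{*}W,\pi_{*}W)=c\sin^2\theta$ exactly as before.

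The only points needing attention are both inherited from the general proof. First, the recasting of $\frac{d}{ds}g_2(U,U)$ through $\psi U$ is the one genuinely delicate step, and it is unaffected by the value of $\alpha$. Second, the substitution $\alpha=0$ must be applied not merely to the target identity \eqref{ad3} but also to the two geodesic equations inserted at the intermediate step, since those likewise carry $\alpha$-terms; all such terms are the $\alpha$-proportional expressions $\alpha||\dot{\Omega}||^2\xi$, $\alpha\eta(U)U$ and $\alpha\eta(U)\pi_{*}W$, which vanish identically at $\alpha=0$ and leave precisely the single surviving term $-\beta\eta(U)||\psi U||^2$. Hence I anticipate no genuine obstacle: the full content already sits inside the general theorem, and what remains is pure specialization and bookkeeping.
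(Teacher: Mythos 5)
Your proposal is correct and matches the paper's (implicit) argument exactly: the paper states this corollary as a direct specialization of the general type-$(\alpha,\beta)$ theorem, and setting $\alpha=0$ in \eqref{ad3} collapses $-\eta(U)[\alpha g_1(W,Z)+\beta||\psi U||^2]$ to $-\beta\eta(U)||\psi U||^2$ while leaving all other terms untouched, precisely as you say. Your additional remark that the $\alpha$-terms must also vanish in the intermediate geodesic equations is sound bookkeeping and consistent with the paper's corresponding geodesic corollary for type $(0,\beta)$.
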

\begin{corollary}
 Let $\pi: M^m\rightarrow B^b$ be an anti-invariant Riemannian map from a Riemannian manifold $(M^m,g_1)$ to a trans-Sasakian manifold $(B^b,g_2,\psi,\eta,\xi)$ of type $(0,0)$ with horizontal Reeb vector field $\xi$. Let $\gamma$ and $\Omega$ be geodesic on $M$ and $B$ respectively. Then $\pi$ is a Clairaut anti-invariant Riemannian map with $r=e^h$ if and only if
  \begin{equation*} 
  g_2(\pi_{*}W,\pi_{*}W)\frac{d(h\circ \Omega)}{ds}=g_2(\mathcal{A}_{\psi\pi_{*}W}\pi_{*}W,\pi_{*}Z)-g_2(\nabla_W^{\pi\perp}\psi(\pi_{*}W)+\nabla_U^{\pi\perp}\psi(\pi_{*}W),\mathcal{C}U),
  \end{equation*}
  where $U\in \Gamma((range\pi_{*})^\perp)$ and $W,Z\in \Gamma((ker\pi_{*})^\perp)$ such that $\pi_{*}Z=\mathcal{B}U$. Also $\pi_{*}W$ and $U$ are vertical and horizontal part of $\dot{\Omega}(s)$ respectively and $h$ is a smooth function on $B$.
\end{corollary}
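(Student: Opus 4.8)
The plan is to read this statement off the preceding Clairaut characterization (the type $(\alpha,\beta)$ theorem) as its cosymplectic special case, since a trans-Sasakian manifold of type $(0,0)$ is by definition cosymplectic. That theorem already characterizes, for general type $(\alpha,\beta)$, when the anti-invariant Riemannian map $\pi$ is Clairaut with $r=e^{h}$ through the identity \eqref{ad3}. Hence the first and essentially the only step is to substitute $\alpha=0$ and $\beta=0$ into \eqref{ad3}. Doing so annihilates the entire Reeb contribution $-\eta(U)[\alpha g_1(W,Z)+\beta||\psi U||^{2}]$ on the right-hand side, and what survives is precisely the displayed equation of the corollary. Since this is logically complete once the general theorem is granted, I would present the substitution as the proof and keep it to a line or two.

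For transparency I would also show how the same conclusion follows from a self-contained argument that parallels the proof of the general theorem but gains from the strong simplification of the cosymplectic setting, where the structure tensor is parallel, $(\nabla_W\psi)Z=0$, so that $\nabla^{B}$ commutes with $\psi$ and $\nabla^{B}_{\dot\Omega}\xi=0$. Concretely I would fix a geodesic $\Omega=\pi\circ\gamma$ with $\dot\Omega(s)=\pi_{*}W(s)+U(s)$, keep $||\dot\Omega(s)||^{2}=c$ constant, and record the angle relations $g_2(U,U)=c\cos^{2}\theta(s)$ and $g_2(\pi_{*}W,\pi_{*}W)=c\sin^{2}\theta(s)$ from \eqref{c}--\eqref{d}. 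Differentiating the first along $\Omega$ gives $2g_2(\nabla^{B}_{\dot\Omega}U,U)=-2c\cos\theta\sin\theta\,\tfrac{d\theta}{ds}$; replacing $U$ by $\psi U$ via the compatibility relation $g_2(\psi U,\psi U)=g_2(U,U)-\eta(U)^{2}$ together with the parallelism of $\psi$, decomposing $\psi U=\mathcal{B}U+\mathcal{C}U$ by \eqref{8}, and inserting \eqref{6} and the second fundamental form \eqref{5}, I would bring in the type $(0,0)$ geodesic equations (the cosymplectic case of the geodesic theorem) to reach the cosymplectic form of \eqref{i}, namely
\[
-c\cos\theta\sin\theta\,\tfrac{d\theta}{ds}=g_2(\mathcal{A}_{\psi\pi_{*}W}\pi_{*}W,\pi_{*}Z)-g_2(\nabla_W^{\pi\perp}\psi(\pi_{*}W)+\nabla_U^{\pi\perp}\psi(\pi_{*}W),\mathcal{C}U).
\]
Finally, imposing the Clairaut condition $\tfrac{d}{ds}(e^{h\circ\Omega}\sin\theta)=0$, which gives $c\sin^{2}\theta\,\tfrac{d(h\circ\Omega)}{ds}=-c\sin\theta\cos\theta\,\tfrac{d\theta}{ds}$ as in \eqref{j}, and combining with \eqref{d} yields the asserted identity.

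The only genuine obstacle is the bookkeeping of the orthogonal splitting $T_{\pi(p)}B=range\pi_{*}\oplus(range\pi_{*})^{\perp}$ across the operators $\mathcal{A}$, $\nabla^{\pi\perp}$, $\mathcal{B}$ and $\mathcal{C}$, so that each term lands in its correct summand and the pairing against $\psi U$ isolates exactly the two inner products on the right-hand side. In contrast to the general $(\alpha,\beta)$ computation, no separate tracking of the $\eta(U)$-terms is needed here: the vanishing of $\alpha$ and $\beta$ erases every Reeb contribution from the structure equation \eqref{4}, which is exactly why the right-hand side collapses to its first two terms. Given the general theorem, however, this direct derivation is unnecessary, and I would relegate it to a remark while taking the one-line specialization as the proof itself.
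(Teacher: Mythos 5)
Your proof is correct and matches the paper's route exactly: the paper states this corollary without separate proof, as the immediate specialization of Theorem 3.2 obtained by setting $\alpha=\beta=0$ in \eqref{ad3}, which kills the Reeb term $-\eta(U)[\alpha g_1(W,Z)+\beta\|\psi U\|^2]$ just as you observe. Your supplementary direct derivation is a faithful (and unneeded, as you note) replay of the paper's proof of the general theorem in the cosymplectic case.
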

\begin{theorem}
 Let $\pi: M^m\rightarrow B^b$ be a Clairaut anti-invariant Riemannian map from a Riemannian manifold $(M^m,g_1)$ to a trans-Sasakian manifold $(B^b,g_2,\psi,\eta,\xi)$ having horizontal Reeb vector field $\xi$ with $r=e^h$. Then either $dim(range\pi_{*})=1$ or $h$ is constant in $\psi (range\pi_{*})$.
\end{theorem}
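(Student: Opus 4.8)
The strategy is to feed carefully chosen geodesics into the master identity \eqref{ad3} so that it degenerates into a purely bilinear relation on $range\pi_{*}$, after which the dichotomy is read off by linear algebra. Since $\pi$ is Clairaut with $r=e^{h}$, the characterization recalled in the preliminaries gives that $\pi$ is umbilical with mean curvature $H=-\nabla^{B}h$, so that \eqref{7} reads $(\nabla\pi_{*})(W,Z)=g_{1}(W,Z)H$. Because the Clairaut condition is imposed for \emph{every} geodesic $\Omega$ on $B$, the identity \eqref{ad3} holds at any point for an arbitrary splitting $\dot{\Omega}=\pi_{*}W+U$ with $\pi_{*}W\in range\pi_{*}$ and $U\in(range\pi_{*})^{\perp}$, and the two components may be prescribed independently by selecting the initial velocity of $\Omega$.

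The key manoeuvre is to take the horizontal part to lie in $\psi(range\pi_{*})$, that is $U=\psi(\pi_{*}W')$ for a second field $W'\in\Gamma((ker\pi_{*})^{\perp})$; this is admissible because the map is anti-invariant, so $\psi(range\pi_{*})\subset(range\pi_{*})^{\perp}$. Using $\psi^{2}=-I+\eta\otimes\xi$ together with $\xi\in(range\pi_{*})^{\perp}$ and $\eta(\pi_{*}W')=0$, one finds $\psi U=-\pi_{*}W'$, hence in the decomposition \eqref{8} one has $\mathcal{B}U=-\pi_{*}W'$ and $\mathcal{C}U=0$; moreover $\eta(U)=(\eta\circ\psi)(\pi_{*}W')=0$. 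Consequently the $\mathcal{C}U$-term and the entire $\eta(U)$-bracket in \eqref{ad3} vanish. The standing constraint $\pi_{*}Z=\mathcal{B}U$ forces $Z=-W'$, and combining the symmetry identity $g_{2}(\mathcal{A}_{\psi\pi_{*}W}\pi_{*}W,\pi_{*}Z)=g_{2}(\psi\pi_{*}W,(\nabla\pi_{*})(W,Z))$ with \eqref{7} and $H=-\nabla^{B}h$ rewrites the surviving right-hand term as $g_{1}(W,W')\,(\psi\pi_{*}W)(h)$. Writing $f(X):=(\psi\pi_{*}X)(h)=g_{2}(\nabla^{B}h,\psi\pi_{*}X)$ and using \eqref{19} to replace $g_{2}(\pi_{*}W,\pi_{*}W)$ by $g_{1}(W,W)$, the identity \eqref{ad3} collapses to
\begin{equation*}
g_{1}(W,W)\bigl[(\pi_{*}W)(h)+f(W')\bigr]=g_{1}(W,W')\,f(W).
\end{equation*}
Taking $W'=W$ first yields $(\pi_{*}W)(h)=0$, and feeding this back gives the clean bilinear identity $g_{1}(W,W)\,f(W')=g_{1}(W,W')\,f(W)$ for all $W,W'\in\Gamma((ker\pi_{*})^{\perp})$.

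Finally I run the dimension dichotomy on this identity. If $\dim(range\pi_{*})=1$ the first alternative holds. If $\dim(range\pi_{*})\geq2$, fix $X\neq0$ and pick $Y\neq0$ with $g_{1}(X,Y)=0$, which is possible precisely because the dimension is at least two. Putting $(W,W')=(X,Y)$ gives $f(Y)=0$, and then $(W,W')=(X+Y,Y)$ gives $0=g_{1}(Y,Y)\,f(X)$ by linearity of $f$, so $f(X)=0$. As $X$ is arbitrary, $f\equiv0$, i.e. $g_{2}(\nabla^{B}h,\psi\pi_{*}X)=0$ for every $X$, which is exactly the assertion that $h$ is constant in $\psi(range\pi_{*})$.

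I expect the principal difficulty to be the reduction step: one must check that the choice $U\in\psi(range\pi_{*})$ is compatible with the constraint $\pi_{*}Z=\mathcal{B}U$ and that it simultaneously annihilates both the $\mathcal{C}U$-connection term and every $\eta(U)$-term, so that \eqref{ad3} genuinely becomes bilinear. The second delicate point, underlying the whole argument, is the independence of $\pi_{*}W$ and $U$ (equivalently of $W$ and $W'$), which is legitimate only because the Clairaut hypothesis is demanded for all geodesics on $B$ and hence for every admissible initial velocity.
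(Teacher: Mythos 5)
Your proposal is correct in substance, but it takes a genuinely different route from the paper. The paper never goes back to geodesics: it starts directly from the Clairaut characterization (Definition 2.2), writes the umbilical relation $(\nabla\pi_{*})(W,Z)=-g_1(W,Z)\nabla^{B}h$, pairs it with $\psi\pi_{*}Y\in\Gamma((range\pi_{*})^{\perp})$, and uses metric compatibility together with the trans-Sasakian relation \eqref{4} (whose $\alpha$- and $\beta$-terms vanish because $\xi$ is horizontal and $\psi(range\pi_{*})\perp range\pi_{*}$) to move $\psi$ across the connection; this produces the symmetric relation $g_1(W,Z)\,g_2(\nabla^{B}h,\psi\pi_{*}Y)=g_1(W,Y)\,g_2(\nabla^{B}h,\psi\pi_{*}Z)$, and the dichotomy follows by setting $W=Z$, swapping $W$ and $Y$, and invoking Cauchy--Schwarz. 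You instead specialize the geodesic identity \eqref{ad3} of Theorem 3.2 at $U=\psi(\pi_{*}W')$, which correctly kills $\mathcal{C}U$ and every $\eta(U)$-term (your computations $\mathcal{B}U=-\pi_{*}W'$, $\mathcal{C}U=0$, $\eta(U)=0$, $Z=-W'$ all check out, as does the sign bookkeeping through $H=-\nabla^{B}h$ and the adjoint identity $g_2(\mathcal{A}_{V}\pi_{*}W,\pi_{*}Z)=g_2(V,(\nabla\pi_{*})(W,Z))$), and you land on $g_1(W,W)f(W')=g_1(W,W')f(W)$ --- exactly the paper's penultimate relation in different clothing; your orthogonal-pair argument and the paper's Cauchy--Schwarz step are equivalent endgames. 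As a by-product you also obtain $(\pi_{*}W)h=0$, though this is immediate anyway from $\nabla^{B}h=-H\in\Gamma((range\pi_{*})^{\perp})$.

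The one genuinely delicate point is the one you flag yourself: the independent prescription of $\pi_{*}W$ and $U$ in \eqref{ad3}. Theorem 3.2 concerns curves $\Omega=\pi\circ\gamma$, and for such a curve $\dot{\Omega}=\pi_{*}\dot{\gamma}$ lies in $range\pi_{*}$, so a nonzero horizontal component $U=\psi(\pi_{*}W')$ can only be justified under the looser reading of Definition 2.1 in which the Clairaut condition is imposed on \emph{all} geodesics of $B$. That looseness is inherited from the paper's own framework for Theorems 3.1 and 3.2, so your proof is no worse than the surrounding material; but note that the paper's proof of this particular theorem is purely tensorial and avoids the issue entirely, which makes it both shorter and logically safer. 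If you want your argument airtight independently of that convention, replace the appeal to \eqref{ad3} by the direct use of \eqref{7} with $H=-\nabla^{B}h$ and \eqref{19}; your surviving computation then collapses to the paper's display chain.
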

\begin{proof}
Since $\pi$ is a Clairaut anti-invariant Riemannian map admitting horizontal Reeb vector field with $r=e^h$, we have
\begin{equation}\label{l}
(\nabla\pi_{*})(W,Z)=-g(W,Z)\nabla^Bh
\end{equation}
for any $W,Z\in \Gamma(ker\pi_{*})^\perp$. Taking inner product with $\psi\pi_{*}Y \in \Gamma((range\pi_{*})^\perp)$ and using \eqref{5}, we get
\begin{equation*}
g_2(\overset{B}{\nabla^{\pi}_{W}}\pi_{*}Z,\psi\pi_{*}Y)=-g_1(W,Z)g_2(\nabla^Bh,\psi \pi_{*}Y).
\end{equation*}
Also, from above equation we have
\begin{equation}\label{k}
g_2(\overset{B}{\nabla^{\pi}_{W}}\psi\pi_{*}Y,\pi_{*}Z)=g_1(W,Z)g_2(\nabla^Bh,\psi \pi_{*}Y).
\end{equation}
Since $B$ is a trans-Sasakian manifold, using \eqref{4} in \eqref{k}, we get
\begin{equation}\label{m}
g_2(\overset{B}{\nabla^{\pi}_{W}}\pi_{*}Y,\psi\pi_{*}Z)=-g_1(W,Z)g_2(\nabla^Bh,\psi \pi_{*}Y).
\end{equation}
Again, using \eqref{l}, we have
\begin{equation}\label{n}
g_2(\overset{B}{\nabla^{\pi}_{W}}\pi_{*}Y,\psi\pi_{*}Z)=-g_1(W,Y)g_2(\nabla^Bh,\psi \pi_{*}Z),
\end{equation}
equating \eqref{m} and \eqref{n}, we obtain
\begin{equation*}
g_1(W,Z)g_2(\nabla^Bh,\psi \pi_{*}Y)=g_1(W,Y)g_2(\nabla^Bh,\psi \pi_{*}Z).
\end{equation*}
Putting $W=Z$ in above equation, we get
\begin{equation}\label{o}
||W||^2g_2(\nabla^Bh,\psi \pi_{*}Y)=g_1(W,Y)g_2(\nabla^Bh,\psi\pi_{*}W).
\end{equation}
Interchanging $W$ and $Y$ in above equation, we have
\begin{equation}\label{p}
||Y||^2g_2(\nabla^Bh,\psi \pi_{*}W)=g_1(W,Y)g_2(\nabla^Bh,\psi\pi_{*}Y).
\end{equation}
From \eqref{o} and \eqref{p}, we obtain
\begin{equation}\label{q}
g_2(\nabla^Bh,\psi \pi_{*}W)\Big[1-\dfrac{g_1(W,Y)g_1(W,Y)}{||W||^2||Y||^2}\Big]=0.
\end{equation}
From \eqref{q}, we conclude that either $dim((ker\pi_{*})^\perp)=1$ or $h$ is constant in $\psi\pi_{*}W$. Since there is linear isometry between $(ker\pi_{*})^\perp$ and $range\pi_{*}$. Hence we have the theorem.
\end{proof}

\begin{theorem}
 Let $\pi: M^m\rightarrow B^b$ be a Clairaut anti-invariant Riemannian map from a Riemannian manifold $(M^m,g_1)$ to a trans-Sasakian manifold $(B^b,g_2,\psi,\eta,\xi)$ having horizontal Reeb vector field $\xi$. If $dim (range\pi_{*})>1$, then $range\pi_{*}$ is minimal.
\end{theorem}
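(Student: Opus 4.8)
The plan is to reduce the statement to the vanishing of the mean curvature vector field of $range\pi_{*}$, and then to annihilate that vector field component by component using the decomposition $(range\pi_{*})^\perp=\psi\,range\pi_{*}\oplus\mu$.

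First I would record the consequences of the Clairaut hypothesis. Since $\pi$ is Clairaut with $r=e^{h}$ it is umbilical, and as already noted in \eqref{l} we have $(\nabla\pi_{*})(W,Z)=-g_{1}(W,Z)\nabla^{B}h$ for all $W,Z\in\Gamma((ker\pi_{*})^\perp)$, where $H=-\nabla^{B}h$ is the mean curvature vector field of $range\pi_{*}$. Taking an orthonormal frame $\{W_{1},\dots,W_{n}\}$ of $(ker\pi_{*})^\perp$ with $n=\dim(range\pi_{*})$, this identity gives $\tfrac{1}{n}\sum_{i}(\nabla\pi_{*})(W_{i},W_{i})=H$. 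Hence $range\pi_{*}$ is minimal if and only if $H=0$, and the whole problem is reduced to showing that the mean curvature vector vanishes.

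Next I would import the preceding theorem. Because $\dim(range\pi_{*})>1$, I may choose $W,Y$ linearly independent in \eqref{q}, so the bracketed factor there is nonzero and $g_{2}(\nabla^{B}h,\psi\pi_{*}W)=0$ for every $W\in\Gamma((ker\pi_{*})^\perp)$. As $H=-\nabla^{B}h$ and the vectors $\psi\pi_{*}W$ span $\psi\,range\pi_{*}$, this says precisely that the $\psi\,range\pi_{*}$-component of $H$ is already zero; equivalently $g_{2}\big(\sum_{i}(\nabla\pi_{*})(W_{i},W_{i}),\psi\pi_{*}W_{j}\big)=0$ for all $j$.

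It remains to show that $H$ has no component along the complementary sub-bundle $\mu$, that is $g_{2}(H,V)=0$ for every $V\in\Gamma(\mu)$ (recall that $\xi\in\mu$ in the horizontal Reeb case); together with the previous paragraph this forces $H=0$ and yields minimality. This last step is where I expect the main obstacle. I would compute $g_{2}\big((\nabla\pi_{*})(W,Z),V\big)$ straight from the definition \eqref{5} of the second fundamental form, transfer the derivative onto $V$ by metric compatibility of $\nabla^{B}$, and substitute the trans-Sasakian identities \eqref{4} and $\nabla^{B}_{\pi_{*}W}\xi=-\alpha\psi\pi_{*}W+\beta(\pi_{*}W-\eta(\pi_{*}W)\xi)$, handling $V=\xi$ and $V\in\mu\ominus\langle\xi\rangle$ separately. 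The delicate point is that the Reeb direction produces a contribution governed by $\beta$ and the invariant part of $\mu$ a contribution governed by $\alpha$, so the $\mu$-component of $H$ collapses transparently in the cosymplectic and $\alpha$-Sasakian specializations, while for a general type $(\alpha,\beta)$ target one must verify that these curvature terms are killed by the standing hypotheses that $\xi$ is horizontal and that $(range\pi_{*})^\perp$ is totally geodesic. Securing the vanishing of this $\mu$-component is the crux of the proof; once it is established, $H=0$ and the minimality of $range\pi_{*}$ follows immediately.
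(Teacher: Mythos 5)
Your reduction to $H=0$ and your second step are sound, and together they already contain everything the paper's own proof does: the printed argument takes the umbilical identity \eqref{7}, moves $\psi$ across the derivative using the trans-Sasakian structure, and polarizes and interchanges $Y$ and $Z$ (equations \eqref{t} and \eqref{u}) to conclude $g_2(H_2,\psi\pi_{*}Y)=0$ when $\dim(range\pi_{*})>1$ --- precisely the conclusion you obtain more economically by quoting \eqref{q} from the preceding theorem. The paper then stops: it never examines the $\mu$-component of $H_2$ at all and simply declares $range\pi_{*}$ minimal (and later, in the harmonicity result, it invokes the full statement $H_2=0$ to kill the $\mu_j$-terms as well). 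So the step you single out as the crux is exactly the step the published proof omits; your diagnosis of where the difficulty sits is sharper than the printed treatment.

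That, however, leaves your proposal with a genuine gap at its own declared crux, and the gap is not a routine verification: the computation you sketch actually fails for general type $(\alpha,\beta)$. Take $V=\xi$ and $W\in\Gamma((ker\pi_{*})^{\perp})$. Since $\pi_{*}(\nabla^{M}_{W}W)\in\Gamma(range\pi_{*})$ is orthogonal to $\xi$ and $g_2(\pi_{*}W,\xi)=0$ because $\xi$ is horizontal, \eqref{5} and metric compatibility give
\begin{equation*}
g_2\big((\nabla\pi_{*})(W,W),\xi\big)=-g_2\big(\pi_{*}W,\nabla^{B}_{\pi_{*}W}\xi\big)=-g_2\big(\pi_{*}W,-\alpha\psi\pi_{*}W+\beta\pi_{*}W\big)=-\beta\,g_1(W,W),
\end{equation*}
where $g_2(\pi_{*}W,\psi\pi_{*}W)=0$ by anti-invariance. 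Comparing with \eqref{7} yields $g_2(H_2,\xi)=-\beta$: the Reeb part of the $\mu$-component of the mean curvature is an honest obstruction, nonzero whenever $\beta\neq0$ (e.g.\ for a Kenmotsu target), and it is not removed by the standing hypotheses that $\xi$ is horizontal and $(range\pi_{*})^{\perp}$ is totally geodesic. Likewise, for $V\in\mu$ orthogonal to $\xi$, the manipulation you propose leaves the term $g_2(\nabla^{\pi\perp}_{W}\psi(\pi_{*}W),\psi V)$, which nothing in the hypotheses forces to vanish. So your third step can only be completed under extra assumptions (essentially $\beta=0$ together with control of that residual $\mu$-term), and what both your first two steps and the paper's argument actually establish is only the vanishing of the $\psi\,range\pi_{*}$-component of $H_2$. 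Your instinct that the $\beta$-governed contribution from the Reeb direction is the danger point was exactly right, but flagging the obstruction is not the same as discharging it.
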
 
 \begin{proof}
 Let $Y\in\Gamma((ker\pi_{*})^\perp)$, then we have
 \begin{equation}\label{s}
 (\nabla\pi_{*})(Y,Y)=g(Y,Y)H_2.
 \end{equation}
 If $\pi_{*}Z\in\Gamma((range\pi_{*})^\perp)$, using \eqref{5}, above eqution can be written as
\begin{eqnarray}\label{r}
g_2(\pi_{*}Y,\nabla_Y^\pi\psi\pi_{*}Z)=-g_1(Y,Y)g_2(H_2,\psi\pi_{*}Z). 
 \end{eqnarray}
 Since $B$ ia a trans-Sasakian manifold, simplifying \eqref{r} we have
 \begin{eqnarray}\label{v}
 g_2(\psi\pi_{*}Y,\nabla_Y^\pi\pi_{*}Z)=g_1(Y,Y)g_2(H_2,\psi\pi_{*}Z). 
  \end{eqnarray}
  Again, from \eqref{s} and \eqref{v} we get
  \begin{equation}\label{t}
 g_1(Y,Z)g_2(H_2,\psi\pi_{*}Y)=g_1(Y,Y)g_2(H_2,\psi\pi_{*}Z).  
  \end{equation}
  Interchanging $Y$ and $Z$, we have
  \begin{equation}\label{u}
  g_1(Y,Z)g_2(H_2,\psi\pi_{*})Z=g_1(Z,Z)g_2(H_2,\psi\pi_{*}Y).
  \end{equation}
 Since $dim (range\pi_{*})>1$, from \eqref{t} and \eqref{u} we conclude the required result.   
 \end{proof}
 
 \begin{theorem} 
 Let $\pi: M^m\rightarrow B^b$ be a Clairaut anti-invariant Riemannian map from a Riemannian manifold $(M^m,g_1)$ to a trans-Sasakian manifold $(B^b,g_2,\psi,\eta,\xi)$ having horizontal Reeb vector field $\xi$. If $range\pi_{*}$ is integrable, then $g_2(\nabla_W^{\pi\perp}\psi(\pi_{*}Y)-\nabla_Y^{\pi\perp}\psi(\pi_{*}W),\mathcal{C}U)=0,$
 where $W,Y\in\Gamma((ker\pi_{*})^\perp)$ and $U\in\Gamma ((range\pi_{*})^\perp),$
 \end{theorem}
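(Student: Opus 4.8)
The plan is to characterise the integrability of $range\pi_{*}$ by the vanishing of the $(range\pi_{*})^{\perp}$-component of the Lie bracket, and then to compute that component through the trans-Sasakian structure so that the term $\mathcal{C}U$ is produced. Concretely, $range\pi_{*}$ is integrable precisely when $g_2([\pi_{*}W,\pi_{*}Y],U)=0$ for all $W,Y\in\Gamma((ker\pi_{*})^{\perp})$ and all $U\in\Gamma((range\pi_{*})^{\perp})$; writing the bracket as $\nabla^{B}_{\pi_{*}W}\pi_{*}Y-\nabla^{B}_{\pi_{*}Y}\pi_{*}W$, it therefore suffices to evaluate $g_2(\nabla^{B}_{\pi_{*}W}\pi_{*}Y,U)$ and antisymmetrise in $W,Y$.

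First I would pass to $\psi$ using the compatibility relation $g_2(\psi A,\psi B)=g_2(A,B)-\eta(A)\eta(B)$, giving $g_2(\nabla^{B}_{\pi_{*}W}\pi_{*}Y,U)=g_2(\psi\nabla^{B}_{\pi_{*}W}\pi_{*}Y,\psi U)+\eta(\nabla^{B}_{\pi_{*}W}\pi_{*}Y)\,\eta(U)$. Since $\xi$ is horizontal, $\eta(\pi_{*}Y)=0$, so the trans-Sasakian identity \eqref{4} collapses $(\nabla_{\pi_{*}W}\psi)\pi_{*}Y$ to a multiple of $\xi$, which is $\psi U$-orthogonal because $g_2(\xi,\psi U)=g_2(\psi\xi,U)=0$; hence $g_2(\psi\nabla^{B}_{\pi_{*}W}\pi_{*}Y,\psi U)=g_2(\nabla^{B}_{\pi_{*}W}\psi\pi_{*}Y,\psi U)$. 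Now I would decompose $\psi U=\mathcal{B}U+\mathcal{C}U$ by \eqref{8} and split $\nabla^{B}_{\pi_{*}W}\psi\pi_{*}Y=-\mathcal{A}_{\psi\pi_{*}Y}\pi_{*}W+\nabla^{\pi\perp}_{W}\psi\pi_{*}Y$ by \eqref{6}. Matching the tangential part against $\mathcal{B}U$ and the normal part against $\mathcal{C}U$ isolates two families of terms: a $\mathcal{C}U$-family $g_2(\nabla^{\pi\perp}_{W}\psi\pi_{*}Y,\mathcal{C}U)$ and a $\mathcal{B}U$-family $-g_2(\mathcal{A}_{\psi\pi_{*}Y}\pi_{*}W,\mathcal{B}U)$, together with an $\eta(U)$-term proportional to $g_2(\pi_{*}W,\pi_{*}Y)$ coming from $\eta(\nabla^{B}_{\pi_{*}W}\pi_{*}Y)$ (using the trans-Sasakian formula $\nabla^{B}_{W}\xi=-\alpha\psi W+\beta(W-\eta(W)\xi)$ and $\eta(\pi_{*}W)=0$). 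Antisymmetrising in $W,Y$, the $\eta(U)$-term is symmetric and cancels, leaving exactly $g_2(\nabla^{\pi\perp}_{W}\psi\pi_{*}Y-\nabla^{\pi\perp}_{Y}\psi\pi_{*}W,\mathcal{C}U)$ plus the antisymmetrised $\mathcal{B}U$-family.

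The main obstacle is to dispose of the $\mathcal{B}U$-family $g_2(\mathcal{A}_{\psi\pi_{*}W}\pi_{*}Y-\mathcal{A}_{\psi\pi_{*}Y}\pi_{*}W,\mathcal{B}U)$. Choosing $Z$ with $\pi_{*}Z=\mathcal{B}U$ and invoking the relation $g_2(\mathcal{A}_{V}\pi_{*}W,\pi_{*}Z)=g_2(V,(\nabla\pi_{*})(W,Z))$ together with the umbilical Clairaut condition \eqref{7}, namely $(\nabla\pi_{*})(W,Z)=g_1(W,Z)H_2$, this family reduces to $g_1(Y,Z)\,g_2(\psi\pi_{*}W,H_2)-g_1(W,Z)\,g_2(\psi\pi_{*}Y,H_2)$. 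For $\dim(range\pi_{*})>1$ the preceding theorem shows $range\pi_{*}$ is minimal, whence $H_2=0$ and the expression vanishes, while for $\dim(range\pi_{*})=1$ it vanishes because $\pi_{*}W$ and $\pi_{*}Y$ are then proportional. In either case the $\mathcal{B}U$-family drops out, so $g_2([\pi_{*}W,\pi_{*}Y],U)=g_2(\nabla^{\pi\perp}_{W}\psi\pi_{*}Y-\nabla^{\pi\perp}_{Y}\psi\pi_{*}W,\mathcal{C}U)$, and integrability of $range\pi_{*}$ forces the left-hand side to vanish, giving the claim.
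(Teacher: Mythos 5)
Your proposal is correct and follows essentially the same route as the paper's proof: the identical computation of $g_2([\pi_{*}W,\pi_{*}Y],U)$ passed through $\psi$ via the trans-Sasakian identity, the same splitting by \eqref{6} and \eqref{8} into an $\mathcal{A}$-against-$\mathcal{B}U$ family and a $\nabla^{\pi\perp}$-against-$\mathcal{C}U$ family, and the same reduction of the former via $g_2(\mathcal{A}_{V}\pi_{*}W,\pi_{*}Z)=g_2(V,(\nabla\pi_{*})(W,Z))$ combined with the umbilical Clairaut condition. Your endgame is marginally more careful than the paper's: you cancel the $\eta(U)$-terms explicitly by antisymmetry (the paper drops them silently), and where the paper tacitly assumes $\dim(range\,\pi_{*})>1$ and invokes Theorem 3.3, you instead use the minimality from Theorem 3.4 and separately dispatch the case $\dim(range\,\pi_{*})=1$ by proportionality, so your argument covers the theorem's hypotheses in full.
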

 \begin{proof}
 Let $W,Y \in \Gamma (ker \pi_{*}^{\perp})$ and $U\in \Gamma ((range \pi_{*})^\perp)$, we have
 \begin{equation}\label{w}
 g_2([\pi_{*}W,\pi_{*}Y],U)=g_2(\nabla^B_{\pi_{*}W}\pi_{*}Y-\nabla^B_{\pi_{*}Y}\pi_{*}W,U).
 \end{equation}
Since $B$ is a trans-Sasakian manifold, from \eqref{b},\eqref{4} and \eqref{w}, we get 
   \begin{equation*}
   g_2([\pi_{*}W,\pi_{*}Y],U)=g_2(\nabla^B_{\pi_{*}W}\psi(\pi_{*}Y)-\nabla^B_{\pi_{*}Y}\psi(\pi_{*}W),\psi U).
   \end{equation*}
Using \eqref{6} and \eqref{8} in above equation, we obtain
 \begin{equation}\label{y}
 g_2([\pi_{*}W,\pi_{*}Y],U)=-g_2(\mathcal{A}_{\psi(\pi_{*}Y)}\pi_{*}W,\mathcal{B}U)+g_2(\mathcal{A}_{\psi(\pi_{*}W)}\pi_{*}Y,\mathcal{B}U)+g_2(\nabla_W^{\pi\perp}\psi(\pi_{*}Y)-\nabla_Y^{\pi\perp}\psi(\pi_{*}W),\mathcal{C}U).
 \end{equation}
Assuming $Z\in\Gamma((ker\pi_{*})^\perp)$ such that $\pi_{*}Z=\mathcal{B}U$ and using \eqref{10}, \eqref{y} can be rewritten as
 \begin{equation*}
 \begin{split}
  g_2([\pi_{*}W,\pi_{*}Y],U)&=-g_2(\psi(\pi_{*}Y),(\nabla\pi_{*})(W,Z))+g_2(\psi(\pi_{*}W),(\nabla\pi_{*})(Y,Z))\\&
  +g_2(\nabla_W^{\pi\perp}\psi(\pi_{*}Y)-\nabla_Y^{\pi\perp}\psi(\pi_{*}W),\mathcal{C}U).
 \end{split}
  \end{equation*}
  Since $\pi$ is a Clairaut Riemannian map, using Definition 2.2. in above equation, we get
   \begin{equation}
   \begin{split}\label{z}
    g_2([\pi_{*}W,\pi_{*}Y],U)&=g_2(\psi(\pi_{*}Y),\nabla^Bh)[g_1(W,Z)-g_1(Y,Z)]\\&
    +g_2(\nabla_W^{\pi\perp}\psi(\pi_{*}Y)-\nabla_Y^{\pi\perp}\psi(\pi_{*}W),\mathcal{C}U).
   \end{split}
    \end{equation}
  Since $dim(range\pi_{*})>1$, using Theorem 3.3 in \eqref{z}, we get the required result. 
 \end{proof}
 
 \begin{theorem}
 Let $\pi: M^m\rightarrow B^b$ be a Clairaut anti-invariant Riemannian map from a Riemannian manifold $(M^m,g_1)$ to a trans-Sasakian manifold $(B^b,g_2,\psi,\eta,\xi)$ having horizontal Reeb vector field $\xi$. Then $(range\pi_{*})^\perp$ is integrable.
 \end{theorem}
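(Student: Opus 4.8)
The plan is to test integrability with the Frobenius (Lie-bracket) criterion. Taking arbitrary $U,V\in\Gamma((range\pi_{*})^\perp)$ and any $\pi_{*}Z\in\Gamma(range\pi_{*})$, I would show $g_2([U,V],\pi_{*}Z)=0$, i.e. that $[U,V]$ stays in $(range\pi_{*})^\perp$. Writing $g_2([U,V],\pi_{*}Z)=g_2(\nabla^B_UV-\nabla^B_VU,\pi_{*}Z)$ isolates the antisymmetric (integrability) part of the $range\pi_{*}$-projection of $\nabla^B$. If the standing hypothesis that $(range\pi_{*})^\perp$ is totally geodesic is read in the strong sense $\nabla^B_UV\in\Gamma((range\pi_{*})^\perp)$, integrability is immediate; otherwise one only controls the symmetric part, and the structural computation below is needed to kill the antisymmetric part.

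First I would pass to the $\psi$-picture. Because $\xi$ is horizontal and $\pi_{*}Z\in range\pi_{*}$, we have $\eta(\pi_{*}Z)=g_2(\pi_{*}Z,\xi)=0$, so \eqref{b} gives $g_2(A,\pi_{*}Z)=g_2(\psi A,\psi\pi_{*}Z)$ for any $A$. Applying this to $A=\nabla^B_UV$ and rewriting $\psi\nabla^B_UV=\nabla^B_U(\psi V)-(\nabla_U\psi)V$ via the trans-Sasakian identity \eqref{4}, the terms of $(\nabla_U\psi)V$ carrying a factor $\xi$ die after pairing with $\psi\pi_{*}Z$ since $\eta\circ\psi=0$, and $g_2(\psi U,\psi\pi_{*}Z)=g_2(U,\pi_{*}Z)=0$ because $U\perp range\pi_{*}$. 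I expect this to leave an identity of the form
\begin{equation*}
g_2([U,V],\pi_{*}Z)=g_2\!\left(\nabla^B_U\psi V-\nabla^B_V\psi U,\ \psi\pi_{*}Z\right)+\alpha\big[\eta(V)\,g_2(U,\psi\pi_{*}Z)-\eta(U)\,g_2(V,\psi\pi_{*}Z)\big].
\end{equation*}

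Next I would decompose $\psi U=\mathcal{B}U+\mathcal{C}U$ and $\psi V=\mathcal{B}V+\mathcal{C}V$ by \eqref{8}. The $range\pi_{*}$-valued parts $\mathcal{B}U,\mathcal{B}V$ produce, after a metric-compatibility step and \eqref{5}, contributions governed by the second fundamental form and the shape operator; invoking the self-adjointness of $\mathcal{A}_V$ (noted after the identity $g_2(\mathcal{A}_V\pi_{*}W,\pi_{*}Z)=g_2(V,(\nabla\pi_{*})(W,Z))$) together with the Clairaut umbilical relation from Definition 2.2, these become symmetric in $U,V$ and so cancel under the antisymmetrization. The $(range\pi_{*})^\perp$-valued parts $\mathcal{C}U,\mathcal{C}V$, along with the explicit $\alpha$-Reeb terms, are precisely the pieces controlled by the totally-geodesic hypothesis on $(range\pi_{*})^\perp$ and by $\eta(U),\eta(V)$ entering symmetrically.

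The hard part will be the cross term $g_2(\nabla^B_U\psi V-\nabla^B_V\psi U,\psi\pi_{*}Z)$: unlike the preceding integrability result for $range\pi_{*}$, it cannot be read off directly and does not collapse to a single Lie bracket, so its vanishing has to be extracted by showing that its $\mathcal{C}$-part is symmetric once total geodesicity is imposed while its $\mathcal{B}$-part is symmetric through $\mathcal{A}_V$ and the umbilical structure. Assembling these cancellations with the symmetric vanishing of the $\alpha$-terms would give $g_2([U,V],\pi_{*}Z)=0$, and hence the integrability of $(range\pi_{*})^\perp$.
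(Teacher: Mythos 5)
Your opening branch is, word for word, the paper's entire proof: the authors take $U,V\in\Gamma((range\pi_{*})^\perp)$, $W\in\Gamma(range\pi_{*})$, write $g_2([U,V],W)=g_2(\nabla^B_UV-\nabla^B_VU,W)$, and conclude immediately because the standing hypothesis of Section 3 declares $(range\pi_{*})^\perp$ totally geodesic, read in the strong (autoparallel) sense $\nabla^B_UV\in\Gamma((range\pi_{*})^\perp)$. So the part of your proposal that actually closes is correct and coincides with the paper; notably, the trans-Sasakian structure, the Clairaut/umbilical condition, and the horizontal Reeb field play no role whatsoever in the paper's argument, despite appearing in the theorem's hypotheses.

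The fallback branch you sketch for the weaker (symmetric-only) reading is not a proof, and it cannot be completed from that hypothesis alone, so you should not present it as a viable route. You yourself leave the decisive cross term $g_2(\nabla^B_U\psi V-\nabla^B_V\psi U,\psi\pi_{*}Z)$ unresolved, and no amount of $\mathcal{B}/\mathcal{C}$ decomposition or umbilical symmetry will kill it: vanishing of the \emph{symmetric} second fundamental form of a distribution simply does not imply integrability, since integrability is governed by the antisymmetric part $[U,V]$, which the symmetric hypothesis leaves unconstrained. A concrete obstruction lives inside this very setting: on a Sasakian manifold one has $\nabla_W\xi=-\psi W$ by \eqref{e}, so for $X,Y\in\ker\eta$ the quantity $\eta(\nabla_XY+\nabla_YX)=g(Y,\psi X)+g(X,\psi Y)$ vanishes by the skewness of the fundamental $2$-form, making the contact distribution totally geodesic in the symmetric sense, while $\eta([X,Y])=-2d\eta(X,Y)\neq 0$ makes it nowhere integrable. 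Hence the strong (autoparallel) reading of the standing hypothesis is essential, and with it the theorem is the one-line Frobenius argument you gave first.
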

 \begin{proof}
Let $U,V\in\Gamma((range\pi_{*})^\perp)$ and $W\in\Gamma(range\pi_{*})$, then we can write 
 \begin{equation}
 g_2([U,V],W)=g_2(\nabla_UV-\nabla_VU,W).
 \end{equation}
 Since, $(range\pi_{*})^\perp$ is a totally geodesic distribution so we have the required result.
 \end{proof}
 
 \begin{theorem}
 Let $\pi: M^m\rightarrow B^b$ be a Clairaut anti-invariant Riemannian map from a Riemannian manifold $(M^m,g_1)$ to a trans-Sasakian manifold $(B^b,g_2,\psi,\eta,\xi)$ having horizontal Reeb vector field $\xi$ and $dim(range\pi_{*})>1$. Then, $\pi$ is harmonic if and only if $ker\pi_{*}$ is minimal.
 \end{theorem}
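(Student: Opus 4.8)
The plan is to read off harmonicity directly from the tension field. Recall that $\pi$ is harmonic precisely when its tension field $\tau(\pi)=\operatorname{trace}_{g_1}(\nabla\pi_*)$ vanishes, so the whole proof reduces to computing this trace and showing that, under the stated hypotheses, it collapses to a single term measuring the mean curvature of $\ker\pi_*$.

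First I would pick, near a point of $M$, an orthonormal frame adapted to the splitting $TM=\ker\pi_*\oplus(\ker\pi_*)^\perp$: take $\{e_1,\dots,e_q\}$ for $\mathcal V=\ker\pi_*$ and $\{E_1,\dots,E_{m-q}\}$ for $\mathcal H=(\ker\pi_*)^\perp$, so that
\begin{equation*}
\tau(\pi)=\sum_{i=1}^{q}(\nabla\pi_*)(e_i,e_i)+\sum_{j=1}^{m-q}(\nabla\pi_*)(E_j,E_j).
\end{equation*}
For the vertical terms, since $\pi_*e_i=0$, the defining formula \eqref{5} gives $(\nabla\pi_*)(e_i,e_i)=-\pi_*(\nabla^M_{e_i}e_i)$; the vertical part of $\nabla^M_{e_i}e_i$ is killed by $\pi_*$, so only $\mathcal H(\nabla^M_{e_i}e_i)$ survives, and summing over $i$ together with the definition \eqref{22} of $\varrho^{\mathcal V}$ yields
\begin{equation*}
\sum_{i=1}^{q}(\nabla\pi_*)(e_i,e_i)=-q\,\pi_*(\varrho^{\mathcal V}),
\end{equation*}
where $\varrho^{\mathcal V}$ is the mean curvature vector field of $\ker\pi_*$.

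Next I would treat the horizontal terms. By Lemma 2.1 the values $(\nabla\pi_*)(E_j,E_j)$ lie in $(range\pi_*)^\perp$, and $\tfrac{1}{m-q}\sum_{j}(\nabla\pi_*)(E_j,E_j)$ is by definition the mean curvature vector field of $range\pi_*$. This is where the hypothesis enters: since $\dim(range\pi_*)>1$, Theorem 3.4 guarantees that $range\pi_*$ is minimal, so this mean curvature vanishes and the whole horizontal sum drops out, leaving $\tau(\pi)=-q\,\pi_*(\varrho^{\mathcal V})$.

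Finally, since $\varrho^{\mathcal V}\in\mathcal H=(\ker\pi_*)^\perp$ and the horizontal restriction of $\pi_*$ is a linear isometry onto $range\pi_*$ by \eqref{19}, the differential $\pi_*$ is injective on $\mathcal H$; hence $\pi_*(\varrho^{\mathcal V})=0$ if and only if $\varrho^{\mathcal V}=0$. Consequently $\tau(\pi)=0$ if and only if $\varrho^{\mathcal V}=0$, that is, if and only if $\ker\pi_*$ is minimal, which is the claim. I expect the only genuinely substantive step to be the identification of the horizontal part of $\tau(\pi)$ with $(m-q)$ times the mean curvature of $range\pi_*$, so that the assumption $\dim(range\pi_*)>1$ can be fed into the minimality result; the vertical computation and the final passage through the isometry are routine bookkeeping.
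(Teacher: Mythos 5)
Your proposal is correct, and it shares the paper's skeleton -- split $\operatorname{trace}(\nabla\pi_*)$ over a frame adapted to $\ker\pi_*\oplus(\ker\pi_*)^\perp$, reduce the vertical sum to $-q\,\pi_*(\varrho^{\mathcal V})$ via \eqref{5} and \eqref{22}, kill the horizontal sum using the minimality of $range\pi_*$ when $\dim(range\pi_*)>1$, and conclude -- but your handling of the horizontal part is genuinely more direct than the paper's. The paper expands each horizontal term $(\nabla\pi_*)(Z_i,Z_i)$ into components along $range\pi_*$, $\mu$ and $\psi(range\pi_*)$ (equation \eqref{21}, where incidentally the same index $i$ does double duty as summation index and basis label), discards the $range\pi_*$-component via Lemma 2.1, pushes the $\psi(range\pi_*)$-component through the trans-Sasakian identity \eqref{4} in a manipulation that returns essentially to its starting point, and only then substitutes the umbilical condition \eqref{7} to produce $H_2$-terms that the minimality theorem annihilates. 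Your single observation -- that by Lemma 2.1 each $(\nabla\pi_*)(E_j,E_j)$ already lies in $\Gamma((range\pi_*)^\perp)$ and that $\frac{1}{m-q}\sum_j(\nabla\pi_*)(E_j,E_j)$ is exactly the mean curvature vector field $H$ of $range\pi_*$ -- collapses all of this contact-geometric bookkeeping into one step, which is a real economy: the trans-Sasakian structure plays no role in this theorem beyond its earlier use in proving the minimality result. Two remarks: first, both arguments lean identically on that minimality theorem, and you (like the paper, which drops the $\mu_j$-terms on the same grounds) need its full strength, $H=0$ including the $\mu$-component, not merely the vanishing of the $\psi(range\pi_*)$-components; second, your closing step -- $\pi_*(\varrho^{\mathcal V})=0$ iff $\varrho^{\mathcal V}=0$ because $(\pi_*)^h$ is a linear isometry by \eqref{19} -- makes explicit the ``if and only if'' passage that the paper leaves implicit, which is an improvement rather than a deviation.
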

\begin{proof}
Let $\{Z_i\}_{i=1}^{r}$ and $\{Z_i\}_{i=r+1}^{m}$ be orthonormal basis of $ker\pi_{*}$ and $(ker\pi_{*})^\perp$ respectively, then we have
\begin{equation}\label{21}
\begin{split}
trace(\nabla \pi_{*})&=\sum_{i=1}^{r}(\nabla \pi_{*})(Z_i,Z_i)+\sum_{i=r+1}^{m}(\nabla \pi_{*})(Z_i,Z_i)\\
&=\sum_{i=1}^{r}(\nabla \pi_{*})(Z_i,Z_i)+\sum_{i=r+1}^{m}g_2((\nabla \pi_{*})(Z_i,Z_i),\pi_{*}Z_i)\pi_{*}Z_i\\&
+\sum_{i=r+1}^{m}\sum_{j=1}^{s}g_2((\nabla \pi_{*})(Z_i,Z_i),\mu_j)\mu_j+\sum_{i=r+1}^{m}g_2((\nabla \pi_{*})(Z_i,Z_i),\psi(\pi_{*}Z_i))\psi(\pi_{*}Z_i),
\end{split}
\end{equation}
where $\{\pi_{*}Z_i\}_{i=r+1}^{m}$ and $\{\mu_j\}_{j=1} ^{s}$ are orthonormal basis of $\Gamma(range\pi_{*})$ and $\Gamma(\mu)$ respectively, and $b=2m+s$.\\ 
Using lemma 2.1 and \eqref{5} in \eqref{21}, we get
\begin{equation}
\begin{split}
trace(\nabla \pi_{*})&=\sum_{i=1}^{r}(\nabla^\pi_{Z_i}\pi_{*}Z_i-\pi_{*}(\nabla_{Z_i}^MZ_i))+\sum_{i=r+1}^{m}\sum_{j=1}^{s}g_2((\nabla \pi_{*})(Z_i,Z_i),\mu_j)\mu_j\\&
+\sum_{i=r+1}^{m}g_2(\nabla^\pi_{Z_i}\pi_{*}Z_i,\psi(\pi_{*}Z_i))\psi(\pi_{*}Z_i).
\end{split}
\end{equation}Since $B$ is a trans-Sasakian manifold, using \eqref{4} in above equation, we have
\begin{equation}\label{23}
\begin{split}
trace(\nabla\pi_{*})&=-\sum_{i=1}^{r}\pi_{*}(\nabla_{Z_i}^MZ_i))+\sum_{i=r+1}^{m}\sum_{j=1}^{s}g_2((\nabla \pi_{*})(Z_i,Z_i),\mu_j)\mu_j\\&
+\sum_{i=r+1}^{m}[g_2(-\psi\nabla^\pi_{Z_i}\psi(\pi_{*}Z_i),\psi(\pi_{*}Z_i))\psi(\pi_{*}Z_i)-\beta g_2(\pi_{*}Z_i,\pi_{*}Z_i)g_2(\xi,\psi(\pi_{*}Z_i))\psi(\pi_{*}Z_i)],\\&
=-\sum_{i=1}^{r}\pi_{*}(\nabla_{Z_i}^MZ_i))+\sum_{i=r+1}^{m}\sum_{j=1}^{s}g_2((\nabla \pi_{*})(Z_i,Z_i),\mu_j)\mu_j\\&
+\sum_{i=r+1}^{m}g_2(\nabla^\pi_{Z_i}\pi_{*}Z_i,\psi(\pi_{*}Z_i))\psi(\pi_{*}Z_i).
\end{split}
\end{equation}
Further, using \eqref{7} and \eqref{22} in \eqref{23}, we get
\begin{equation}
\begin{split}
trace(\nabla \pi_{*})&=-r\pi_{*}(\varrho^{\mathcal{V}})+\sum_{i=r+1}^{m}\sum_{j=1}^{s}g_2(H_2g_1(Z_i,Z_i),\mu_j)\mu_j\\&+(m-r)\sum_{i=r+1}^{m}g_2(H_2,\psi(\pi_{*}Z_i))\psi(\pi_{*}Z_i),
\end{split}
\end{equation}  
where, $\varrho^{\mathcal{V}}$ is mean curvature of $ker\pi_{*}$. Since $dim(range\pi_{*})>1$, from theorem 3.4 and above equation, we get
\begin{equation}
trace(\nabla \pi_{*})=-r\pi_{*}(\varrho^{\mathcal{V}}).
\end{equation}
Thus, $\pi$ is harmonic if and only if $ker\pi_{*}$ is minimal.
\end{proof}  
 
\begin{example}
Let $\pi: M\rightarrow B$ be a smooth map defined as $$\pi(x,y,z)=(0,x+y,0),$$
where $M=\{(x,y,z)\in \mathbb{R}^3,x,y,z\neq0\}$ is a Riemannian manifold with Riemannian metric $$g_1=\frac{1}{4}\begin{pmatrix}
\frac{3}{2}&\frac{1}{2}&0\\
\frac{1}{2}&\frac{3}{2}&0\\
0&0&1
\end{pmatrix}$$ on $M$ and $B=\{(x,y,z)\in\mathbb{R}^3,z\neq0\}$ is a trans-Sasakian manifold with contact structure given by Example 2.1., then we have
$$(ker\pi_{*})=span\big\{e_1-e_2,e_3\big\},$$
$$(ker\pi_{*})^\perp=span\{Z=e_1+e_2\},$$ where $e_i$ are standard basis vector fields on $M$. Also, by simple computation it is easy to see that 
$$(range\pi_{*})=span\big\{\pi_{*}Z=E_1=2\frac{\partial}{\partial v}\big\}$$
$$(range\pi_{*})^\perp=span\big\{E_2=2(\frac{\partial}{\partial u}+v\frac{\partial}{\partial w}),E_3=2\frac{\partial}{\partial w}=\xi\big\}$$
and $\psi E_1=E_2$ with $g_1(Z,Z)=g_2(\pi_{*}Z,\pi_{*}Z)$. Thus, $\pi$ is an anti-invariant Riemannian map.\\
In order to show that the defined map is Clairaut Riemannian map we find a smooth function $h$ satisfying equation $(\nabla\pi_{*})(Z,Z)=-g(Z,Z)\nabla^Bh$. Here, $(\nabla\pi_{*})(Z,Z)=0$, $g_1(Z,Z)=1$, thus by taking constant $h$, we can verify $(\nabla\pi_{*})(Z,Z)=-g(Z,Z)\nabla^Bh$. Also from \eqref{4}, we have $\alpha=1, \beta=0$. Hence $\pi$ is a Clairaut anti-invariant Riemannian map from Riemannian manifold to trans-Sasakian manifold of type $(1,0).$
\end{example} 
\begin{example}
Let $\pi:M\rightarrow B$ be a Riemannian map defined as
$$\pi(x,y,z)=(0,\frac{x-y}{\surd 2},0), $$  where $M$ is a Riemannian manifold and $B$ is a Sasakian manifold. Since, there is a linear isometry between $(ker\pi_{*})^\perp$ and $(range\pi_{*})$, we can define a smooth function $k$ between these distributions and then pullback $k^*$ of that function on $(ker\pi_{*})^\perp$ in terms of $B's$ co-ordinate system such that $k^*((ker\pi_{*})^\perp)=e^{-w}$, then extend this function on whole $TM$. Now, we can define a global frame $\{e_1,e_2,e_3\}$ with $e_1=e^{-w}\frac{\partial}{\partial x},e_2=e^{-w}\frac{\partial}{\partial y},e_3=\frac{\partial}{\partial z}$ and a Riemannian metric $g_1$ on $M$ such that $g_1(x,y,z)=e^{2w}dx^2+e^{2w}dy^2+dz^2$, whereas $B=\{(u,v,w)\in \mathbb{R}^3|v,w\neq0\}$ is equipped with a contact metric structure $(g_2, \psi,\eta, \xi),$ given by\\
 $$ g_2=(e^{2w}+v^2)du^2+e^{2w}dv^2+(-2v)dvdw+dw^2,~~\psi=\begin{pmatrix}
 			 				0&1&0\\
 			            	-1&0&0\\
 			          		0&v&0
 							 \end{pmatrix},~~ 
 					 	\eta=(dw-vdu), ~~\xi=\frac{\partial}{\partial w}$$  and  $\{E_1,E_2,E_3\}$ is a global frame on $B$, defined as $E_1=e^{-w}\frac{\partial}{\partial v}, E_2=\psi E_1=e^{-w}(\frac{\partial}{\partial u}+v\frac{\partial}{\partial w}), E_3=\frac{\partial}{\partial w}=\xi$.\\
 Then, by simple calculation, we get
 $$(ker\pi_{*})^\perp=span\big\{Z=\frac{1}{\surd 2}(e_1-e_2)=\frac{e^{-w}}{\surd 2}\big(\frac{\partial}{\partial x}-\frac{\partial}{\partial y}\big)\big\},$$
 $$range\pi_{*}=span\big\{\pi_{*}Z=e^{-w}\frac{\partial}{\partial v}=E_1\big\},$$
 $$(range\pi_{*})^\perp=span\big\{E_2=e^{-w}\big(\frac{\partial}{\partial u}+\frac{\partial}{\partial w}\big), E_3=\frac{\partial}{\partial w}=\xi\big\}.$$	
Also, $g_1(Z,Z)=g_2(\pi_{*}Z,\pi_{*}Z)=1$ and $\psi(range\pi_{*})\subset((range\pi_{*})^\perp)$, therefore $\pi$ is an anti-invariant Riemannian map. In order to prove that $\pi$ is a Clairaut map, we must have $\nabla \pi_{*}(Z,Z)=-g_1(Z,Z)\nabla^Bh.$ Here, by some computation, it is easy to see that $g_1(Z,Z)=1$ and $\nabla \pi_{*}(Z,Z)=-E_3-ve^{-w}E_2$, therefore, we have $\nabla^Bh=E_3+ve^{-w}E_2 $. For a smooth function $h$, the value of $\nabla^Bh$ with respect to $g_2$ is given by $\nabla^Bh=\big(e^{-w}\frac{\partial h}{\partial u}+ve^{-w}\frac{\partial h}{\partial w}\big)E_2+v^2\frac{\partial h}{\partial w}E_3$, which implies that $h=\frac{1}{ve^w}.$ Also from \eqref{4}, we have $\alpha=\frac{1}{2}e^{-2w},\beta=1$. Hence $\pi$ is a Clairaut anti-invariant Riemannian map to trans-Sasakian manifold of type $(\frac{1}{2}e^{-2w},1)$.     				 	
\end{example}
 
 	\section{Acknowledgments}
 The first author is thankful to UGC for providing financial assistance in terms of MANF scholarship vide letter with UGC-Ref. No. 1844/(CSIR-UGC NET JUNE 2019). The second author is thankful to DST Gov. of India for providing financial support in terms of DST-FST label-I grant vide sanction number SR/FST/MS-I/2021/104(C).


\begin{thebibliography}{30}
 	 \bibitem {S4} M. A. Akyol, B. \c{S}ahin, Conformal anti-invariant Riemannian maps to K\"{a}hler manifolds,  Politehn. Univ. Bucharest Sci. Bull. Ser. A Appl. Math. Phys., 80(4) (2018), 187-198.
   	 
   	 	 
   	 \bibitem {S3} M. A.  Akyol, B. \c{S}ahin, Conformal slant Riemannian maps to K\"{a}hler manifolds, Tokyo J. Math., 42(1) (2019), 225-237. 	
   	 
 
  	\bibitem{Al} D. Allison, Lorentzian Clairaut submersions, Geom. Dedicata, 63 (1996), 309-319.
  	
  	 \bibitem{BA} C. S. Bagewadi, Some curvature tensors on a trans-Sasakian manifold, Turkish Journal of Mathematics, 31(2) (2007), 111-121.
  	  
    	\bibitem{Bi} R. L. Bishop, Clairaut submersions, Differential Geometry (in Honor of Kentaro Yano), Kinokuniya, Tokyo, (1972), 21-31. 
    	
    		\bibitem{B} D. E.  Blair,  Lecture notes in Mathematics, (Springer-Verlag Berlin Heidelberg, New York, 1976).
    	  	 	
    	  	 		 		
  	\bibitem{F}  A. E. Fisher, Riemannian maps between Riemannian manifolds, Contemp. Math., 132 (1992), 331-336 .
  	
  		 \bibitem{P1} P. Gupta, A. K. Rai, Clairaut anti-invarient submersion from nearly Kähler manifold, arXiv. Mat. Vesnik Accepted, 2010 (2020).
  		 
  	  	  \bibitem{P2} P. Gupta, S. K. Singh, Clairaut semi-invariant submersions from K\"{a}hler manifolds, Afrika Matematika, 33(1) (2022), 8. 	
  	
  	 \bibitem{Ou} J. A. Oubina, New class of almost contact metric manifolds, Publ. Math. Debrecen,
  	  	 32 (1985), 187-193.
  	 		   
  	 	\bibitem{PK}  M. Polat, and K. Meena, Clairaut semi-invariant Riemannian maps to K\" ahler manifolds, arXiv preprint arXiv:2303.08108 (2023).
  	 
  	\bibitem{PS} K. S. Park, B. \c{S}ahin, Semi-slant Riemannian maps into almost Hermitian manifolds, Czechoslovak Mathematical Journal, 64 (2014), 1045-1061.
 
   \bibitem{S7} B. \c{S}ahin, Riemannian Submersions, Riemannian Maps in Hermitian Geometry and their Applications (Elsevier Acadmic Press, Cambridge, 2017).
  \bibitem{SB} B. \c{S}ahin, Semi-invariant Riemannian maps to Kähler manifolds, International Journal of Geometric Methods in Modern Physics, 8(7) (2011), 1439-1454.
  	

  	  \bibitem {S1} B. \c{S}ahin, Invariant and anti-invariant Riemannian maps to K\"{a}hler manifolds,  International Journal of Geometric Methods in Modern Physics, 7 (2010), 1-9.
  	  
  	   \bibitem{S8} B. \c{S}ahin., Circles along a Riemannian map and Clairaut Riemannian maps, Bulletin of the Korean Mathematical Society, 54(1) (2017), 253-264.	
  	   	\bibitem{GY}  G. Shanker, A. Yadav,	Pointwise slant Riemannian submersion from nearly K¨ahler manifold, Honam Mathematical J., 45(1) (2023), 109-122.
  	     	 		
  	    \bibitem{H2} H. M. Tastan, S. Gerdan, Clairaut anti-invariant submersion from Sasakian and Kenmotsu manifolds, Mediterranean Journal of Mathematics, 14(6)  (2017), 1-17. 
  	   	  
  	 		\bibitem{AK} A. Yadav, K. Meena, Clairaut invariant Riemannian maps with Kahler structure, Turkish Journal of Mathematics, 46(3) (2022), 1020-1035.
  	 		
  	 		\bibitem{AK1} A. Yadav, K. Meena, Clairaut anti-invariant Riemannian maps from Kahler manifolds, Mediterranean Journal of Mathematics, 19(3) (2022), 1-19.
  	 		
  	 	 \bibitem{YR} L. Yanlin, R. kumar, A. Haseeb, S.Kumar, A study of Clairaut semi-invariant Riemannian maps from cosymplectic manifold, Axioms, 11(10) (2022), 503.
  	 	 
  	 		
  	 	
  	 	\bibitem{AZ} A. Zaidi, G. Shanker, A. Yadav, Conformal anti-invariant Riemannian map from or to Sasakian manifold, Lobachevskii Journal of Mathematics, 44(4) (2023), 1509-1518.
  	 	 	
 	 
 	  
 \end{thebibliography}
 \end{document}